\newtheorem{theorem}{Theorem}   
\newtheorem{lemma}[theorem]{Lemma}
\newtheorem{problem}{Problem}
\newtheorem{proposition}[theorem]{Proposition}
\newtheorem{corollary}[theorem]{Corollary}
\newtheorem{example}{Example}
\newtheorem{remark}{Remark}
\newtheorem{dfn}{Definition}
\newcommand{\norm}[1]{\left\lVert#1\right\rVert}
\newenvironment{proof}{\textbf{Proof:}}{\hfill$\square$}
\DeclareMathOperator{\rank}{rank}
\DeclareMathOperator{\cker}{cker}
\DeclareMathOperator{\col}{col}
\DeclareMathOperator{\vect}{vec}
\DeclareMathOperator{\modulo}{mod}      
\begin{document}

\begin{frontmatter}

\title{Topology Identification of Heterogeneous Networks: Identifiability and Reconstruction} 


\author[add1,add2]{Henk J. van Waarde}\ead{h.j.van.waarde@rug.nl},
\author[add1,add3]{Pietro Tesi}\ead{pietro.tesi@unifi.it},        
\author[add2]{M. Kanat Camlibel}\ead{m.k.camlibel@rug.nl}  

\address[add1]{Engineering and Technology Institute Groningen, University of Groningen, 9747 AG Groningen, The Netherlands.}   
\address[add2]{Bernoulli Institute for Mathematics, Computer Science and Artificial Intelligence, University of Groningen, 9747 AG Groningen, The Netherlands.}  
\address[add3]{Department of Information Engineering, University of Florence, 50139 Florence, Italy.}

\begin{keyword}                           
Networked control systems; identification methods             
\end{keyword}                             
                                         
\begin{abstract}                          
This paper addresses the problem of identifying the graph structure of a dynamical network using measured input/output data. This problem is known as topology identification and has received considerable attention in recent literature. Most existing literature focuses on topology identification for networks with node dynamics modeled by single integrators or single-input single-output (SISO) systems. The goal of the current paper is to identify the topology of a more general class of heterogeneous networks, in which the dynamics of the nodes are modeled by general (possibly distinct) linear systems. Our two main contributions are the following. First, we establish conditions for topological identifiability, i.e., conditions under which the network topology can be uniquely reconstructed from measured data. We also specialize our results to homogeneous networks of SISO systems and we will see that such networks have quite particular identifiability properties. Secondly, we develop a topology identification method that reconstructs the network topology from input/output data. The solution of a generalized Sylvester equation will play an important role in our identification scheme.
\end{abstract}

\end{frontmatter}

\section{Introduction}
Graph structure plays an important role in the overall behavior of dynamical networks. Indeed, it is well-known that the convergence rate of consensus algorithms depends on the connectivity of the network topology. In addition, many properties of dynamical networks, like controllability, can be assessed on the basis of the network graph \cite{Liu2011,Chapman2013,Jia2019}. Unfortunately, the graph structure of dynamical networks is often unknown. This problem is particularly apparent in biology, for example in neural networks and genetic networks \cite{Julius2009}, but also emerges in other areas such as power grids \cite{Cavraro2018}. 

To deal with this problem, several topology identification methods have been developed. Such methods aim at reconstructing the topology (and weights) of a dynamical network on the basis of measured data obtained from the network. 

The paper \cite{Goncalves2008} studies necessary and sufficient conditions for dynamical structure reconstruction, see also \cite{Yuan2011}. A node-knockout scheme for topology identification was introduced in \cite{Nabi-Abdolyousefi2010} and further investigated in \cite{Suzuki2013}. Moreover, the paper \cite{Sanandaji2011} studies topology identification using compressed sensing, while \cite{Materassi2012} considers network reconstruction using Wiener filtering. A distributed algorithm for network reconstruction has also been studied \cite{Morbidi2014}. \cite{Shahrampour2015} study topology identification using power spectral analysis. In \cite{vanWaarde2019}, the network topology was reconstructed by solving certain Lyapunov equations. A Bayesian approach to the network identification problem was investigated in \cite{Chiuso2012}. The network topology was inferred from multiple independent observations of consensus dynamics in \cite{Segarra2017}. 
The paper \cite{Coutino2020} studies topology identification via subspace methods. There are also several results for topology reconstruction of nonlinear systems, see e.g., \cite{Wang2011,Timme2014,Shen2017}
albeit in this case few guarantees on the accuracy of identification
can be given.
In addition, we remark that the complementary problem of identifying the nodes dynamics assuming a \emph{known} topology has also been studied, see e.g. \cite{vandenHof2013,Haber2014,Hendrickx2019,vanWaarde2018,vanWaarde2018b,Ramaswamy2018,Cheng2019}, along with the joint topology and dynamics recovery problem
\cite{Ioannidis2019,Wai2019}. %

The goal of this paper is to provide a comprehensive treatment of topology identification for linear MIMO heterogeneous networks, with no assumptions on the network structure such as sparsity or regularity. Most existing work on topology identification emphasizes the role of the network topology by considering relatively simple node dynamics. For example, networks of single integrators have been studied in \cite{Nabi-Abdolyousefi2010,Morbidi2014,Hassan-Moghaddam2016,vanWaarde2019}. In addition, the papers \cite{Suzuki2013} and \cite{Shahrampour2015} consider homogeneous networks comprised of identical single-input single-output systems. Nonetheless, there are many examples of networks in which the subsystems are not necessarily the same, for example, mass-spring-damper networks \cite{Koerts2017}, where the masses at the nodes can be distinct. Heterogeneity in the node dynamics has also been studied in the detail in synchronization problems, see e.g. \cite{Wieland2011,Yang2014}.

We study topology identification for the general class of \emph{heterogeneous} networks, where the node dynamics are modelled by general, possibly distinct, MIMO linear systems. We divide our analysis in two parts, namely the study of \emph{identifiability} and the development of \emph{identification algorithms}. The study of identifiability of the network topology deals with the question whether there exists a data set from which the topology can be uniquely identified. Identifiability of the topology is hence a property of the node systems and the network graph, and is \emph{independent} of any data. Topological identifiability is an important property. Indeed, if it is not satisfied, then it is impossible to uniquely identify the network topology, regardless of the amount and richness of the data. After studying topological identifiability, we will turn our attention towards identification algorithms. Our two main contributions are hence the following:
\begin{enumerate}
	\item We provide conditions for topological identifiability of general heterogeneous networks. Our results recover an identifiability result for the special case of networks of single integrators \cite{Pare2013,vanWaarde2019}. We will also see that homogeneous networks of single-input single-output systems have quite special identifiability properties that do not extend to the general case of heterogeneous networks.  
	\item We establish a topology identification scheme for heterogeneous networks. The idea of the method is to reconstruct the interconnection matrix of the network by solving a \emph{generalized Sylvester equation} involving the Markov parameters of the network. We prove that the network topology can be uniquely reconstructed in this way, under the assumptions of topological identifiability and persistency of excitation of the input data.  
\end{enumerate}

A preliminary version of our work was presented in \cite{vanWaarde2019b}. The contributions of the current paper are significant in comparison to \cite{vanWaarde2019b} for two reasons. First, the identifiability results presented here are more general as they are applicable in situations when not all network nodes are excited. Also, the necessary conditions for identifiabi\-lity of single-integrator networks are shown to carry over to the more general class of homogeneous networks of single-input single-output systems. Secondly, the topology identification approach is new, and attractive in comparison to \cite{vanWaarde2019b} since the network interconnection matrix is computed directly and without the use of auxiliary variables. Our approach is also suitable for ``paralle\-lization" in the sense that each row block of the interconnection matrix can be computed independently.

The paper is organized as follows. In Section \ref{sectionproblem} we formulate the problem. Section \ref{sectionidentifiability} contains our results on topological identifiability. Subsequently, we describe our topology identification method in Section \ref{sectionresultsidentification}. Finally, we state our conclusions in Section \ref{sectionconclusion}. 

\subsection*{Notation}
We denote the \emph{Kronecker product} by $\otimes$. The \emph{direct sum} of matrices $A_1,A_2,\dots,A_k$ is the block diagonal matrix defined by 
$$
\bigoplus_{i = 1}^k A_i := \begin{pmatrix}
A_1 & 0 & \cdots & 0 \\
0 & A_2 & \cdots & 0 \\
\vdots & \vdots & \ddots & \vdots \\
0 & 0 & \cdots & A_k
\end{pmatrix}.
$$
Moreover, the \emph{concatenation} of matrices $A_1,A_2\dots,A_k$ of compatible dimensions is defined by
$$\col(A_1,A_2,\dots,A_k) := \begin{pmatrix}
A_1^\top & A_2^\top & \cdots & A_k^\top 
\end{pmatrix}^\top.$$
Finally, let $A(z)$ be an $n \times m$ rational matrix. Then the \emph{constant kernel} of $A(z)$ is $\cker A(z) := \{w \in \mathbb{R}^m \mid A(z)w = 0 \}$. 

\section{Problem formulation}
\label{sectionproblem}

We consider a network model similar to the one studied by Fuhrmann and Helmke \cite[Ch. 9]{Fuhrmann2015}. Specifically, we consider networks composed of $N$ discrete-time systems of the form
\begin{equation}
\label{nodesystems}
\begin{aligned}
x_i(t+1) &= A_i x_i(t) + B_i v_i(t) \\
w_i(t) &= C_i x_i(t),
\end{aligned}
\end{equation}
where $x_i(t) \in \mathbb{R}^{n_i}$ is the state of the $i$-th node system, $v_i(t) \in \mathbb{R}^{m_i}$ is its input and $w_i(t) \in \mathbb{R}^{p_i}$ is its output for $i = 1,2,\dots,N$. The real matrices $A_i$, $B_i$ and $C_i$ are of appropriate dimensions. We occasionally use the shorthand notation $(A_i,B_i,C_i)$ to denote \eqref{nodesystems}. The coupling between nodes is realized by the inputs $v_i(t)$, which are specified as
\begin{equation*}
v_i(t) = \sum_{j = 1}^N Q_{ij} w_j(t) + R_i u(t),
\end{equation*}
where $u(t) \in \mathbb{R}^m$ is the external network input and $Q_{ij}$ and $R_i$ are real matrices of appropriate dimensions. In addition, let $S_i$ be a real $p \times p_i$ matrix and consider the external network output $y(t) \in \mathbb{R}^p$, defined by
\begin{equation*}
y(t) = \sum_{i = 1}^N S_i w_i(t).
\end{equation*}
Then, by introducing the block diagonal matrices 
\begin{equation}
\label{ABC}
A = \bigoplus_{i = 1}^N A_i, \: B = \bigoplus_{i = 1}^N B_i, \text{ and }
C = \bigoplus_{i = 1}^N C_i,
\end{equation} 
and the matrices
\begin{align*}
Q &= \begin{pmatrix}
Q_{11} & \cdots & Q_{1N} \\
\vdots & \ddots & \vdots \\
Q_{N1} & \cdots & Q_{NN}
\end{pmatrix}, \:
R = \begin{pmatrix}
R_{1} \\
\vdots \\
R_{N} 
\end{pmatrix}, 
\: S^\top = \begin{pmatrix}
S_{1}^\top \\
\vdots \\
S_{N}^\top 
\end{pmatrix},
\end{align*}
we can represent the network dynamics compactly as
\begin{equation}
\label{system}
\begin{aligned}
x(t+1) &= (A + B Q C) x(t) + B R u(t) \\
y(t) &= S C x(t).
\end{aligned}
\end{equation}
Here $x(t) = \col(x_1(t),x_2(t),\dots,x_N(t)) \in \mathbb{R}^n$ where $n$ is defined as $n := \sum_{i = 1}^N n_i$. We emphasize that the coupling of the node dynamics is induced by the matrix $Q$, which we will hence call the \emph{interconnection matrix}.

There are a few important special cases of node dynamics \eqref{nodesystems} and resulting network dynamics \eqref{system}. If $A_i = A_0$, $B_i = B_0$ and $C_i = C_0$ for all $i = 1,2,\dots,N$, the dynamics of all nodes in the network are the same and the resulting dynamical network is called \emph{homogeneous}. The more general setting in which the node dynamics are not necessarily the same is referred to as a \emph{heterogeneous} network. Another special case of node dynamics occurs when $m_i = p_i = 1$ for all $i = 1,2,\dots,N$. In this case, the node systems are single-input single-output (SISO) systems, and the resulting dynamical network is referred to as a \emph{SISO network}\footnote{Here we emphasize that `SISO' refers to the node systems of the network. The overall network dynamics \eqref{system} can still have multiple external inputs and outputs.}. Topology identification of homogeneous SISO networks has been studied in \cite{Suzuki2013} and \cite{Shahrampour2015}. In addition, topology identification has been well-studied (see e.g \cite{Goncalves2008,Nabi-Abdolyousefi2010,Hassan-Moghaddam2016,vanWaarde2019}) for networks of so-called \emph{single-integrators}, in which the node dynamics are described by $\dot{x}_i(t) = v_i(t)$. This type of node dynamics can be seen continuous-time counterpart of \eqref{nodesystems} where $A_i = 0$, $B_i = 1$ and $C_i = 1$ for $i = 1,2,\dots,N$.

The purpose of this paper is to study topology identification for general, heterogeneous dynamical networks of the form \eqref{system}. Although we focus on discrete-time systems, our results can be stated for continuous-time systems as well. In order to make the problem more precise, we first explain what we mean by the topology of \eqref{system}. Let $\mathcal{G} = (\mathcal{V},\mathcal{E})$ be a weighted directed graph with $\mathcal{V} = \{1,2,\dots,N\}$ and $\mathcal{E} \subseteq \mathcal{V} \times \mathcal{V}$ such that $(j,i) \in \mathcal{E}$ if and only if $Q_{ij} \neq 0$. Each edge $(j,i) \in \mathcal{E}$ is weighted by the nonzero matrix $Q_{ij}$. We refer to $\mathcal{G}$ as the \emph{topology} of the dynamical network \eqref{system}. With this in mind, the problem of \emph{topology identification} concerns finding $\mathcal{G}$ (equiva\-lently, finding $Q$) using measurements of the input $u(t)$ and output $y(t)$ of \eqref{system}. We assume knowledge of the local node dynamics (i.e., the matrices $A,B$ and $C$) as well as the external input/output matrices $R$ and $S$\footnote{This assumption is standard in the literature on topology identification, see, e.g., \cite{Shahrampour2015} and \cite{Suzuki2013}. Without knowledge of the node dynamics, topology identification becomes a full system identification problem.}. 

At this point, we may ask the following natural question: is it possible to \emph{uniquely} reconstruct the topology of \eqref{system} from input/output data? To formalize and answer this question, we define the notion of \emph{topological identifiability}. Let $y_{u,x_0,Q}(t)$ denote the output of \eqref{system} at time $t$, where the subscript emphasizes the dependence on the input $u( \cdot )$, the initial condition $x_0 = x(0)$ and interconnection matrix $Q$. The following definition is inspired by \cite{Grewal1976} and defines the notion of \emph{distinguishability} of interconnection matrices. 

\begin{dfn}
	\label{definitiondistinguishability}
	Let $y_{u,x_0,Q}( \cdot )$ and $y_{u,\bar{x}_0,\bar{Q}}( \cdot )$ denote the output trajectories of two systems of the form \eqref{system} with interconnection matrices $Q$ and $\bar{Q}$ and initial conditions $x_0$ and $\bar{x}_0$, respectively. We say that $Q$ and $\bar{Q}$ are \emph{indistinguishable} if there exist initial conditions $x_0, \bar{x}_0 \in \mathbb{R}^n$ such that  
	\begin{equation*}
	y_{u,x_0,Q}( \cdot ) = y_{u,\bar{x}_0,\bar{Q}}( \cdot )
	\end{equation*}
	for all input functions $u$. Moreover, $Q$ and $\bar{Q}$ are said to be \emph{distinguishable} if they are not indistinguishable. 
\end{dfn}

With this in mind, the topology of \eqref{system} is said to be identifiable if $Q$ is distinguishable from all other interconnection matrices. More formally, we have the following definition. 

\begin{dfn}
	Consider system \eqref{system} with interconnection matrix $Q$. The topology of system \eqref{system} is said to be \emph{identifiable} if $Q$ and $\bar{Q}$ are distinguishable for all real $\bar{Q} \neq Q$.
\end{dfn}

The importance of topological identifiability lies in the fact that unique reconstruction of $Q$ from input/output data is \emph{only} possible if the topology of \eqref{system} is identifiable. Indeed, if this is not the case, there exists some $\bar{Q} \neq Q$ that is indistinguishable from $Q$, meaning that both $Q$ and $\bar{Q}$ explain \emph{any} input/output trajectory of \eqref{system}. Topological identifiability is hence a structural property of the system \eqref{system} that is independent of a particular data sequence and that is \emph{necessary} for the unique reconstruction of $Q$ from data. 

Following \cite{Grewal1976}, it is straightforward to characterize topological identifiability in terms of the transfer matrix from $u$ to $y$. This transfer function will be denoted by
\begin{equation}
\label{networktransfer}
F_Q(z) := SC(zI-A-BQC)^{-1}BR.
\end{equation}
\begin{proposition}
	\label{theoremtransfer}
	The topology of the networked system \eqref{system} is identifiable if and only if the following implication holds:
	\begin{equation*}
	F_Q(z) = F_{\bar{Q}}(z) \text{ for real } \bar{Q} \implies Q = \bar{Q}.	\end{equation*}
\end{proposition}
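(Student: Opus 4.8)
The plan is to reduce the proposition to a single equivalence, namely that two interconnection matrices $Q$ and $\bar{Q}$ are indistinguishable if and only if $F_Q(z) = F_{\bar{Q}}(z)$. Once this equivalence is established, the proposition is immediate: identifiability asserts that $Q$ and $\bar{Q}$ are distinguishable whenever $\bar{Q} \neq Q$, which is the contrapositive of the statement ``$Q$ and $\bar{Q}$ indistinguishable implies $\bar{Q} = Q$'', and under the equivalence this reads exactly as the implication $F_Q(z) = F_{\bar{Q}}(z) \Rightarrow Q = \bar{Q}$. So the entire task is to prove the equivalence between indistinguishability and equality of transfer matrices.

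To set this up, I would first write the explicit solution of \eqref{system}. For the system with interconnection matrix $Q$ and initial condition $x_0$, the output decomposes as a zero-input response plus a zero-state response,
$$
y_{u,x_0,Q}(t) = SC(A+BQC)^t x_0 + \sum_{k=0}^{t-1} SC(A+BQC)^{t-1-k} BR \, u(k),
$$
and analogously for $\bar{Q}$ and $\bar{x}_0$. The coefficients $SC(A+BQC)^j BR$ in the convolution are precisely the Markov parameters of $F_Q(z)$, since $(zI-A-BQC)^{-1} = \sum_{j \geq 0}(A+BQC)^j z^{-(j+1)}$ for large $|z|$; consequently equality of all Markov parameters is equivalent to $F_Q(z) = F_{\bar{Q}}(z)$.

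For the direction $F_Q = F_{\bar{Q}} \Rightarrow$ indistinguishable, I would choose $x_0 = \bar{x}_0 = 0$: this annihilates both zero-input responses, while equality of the Markov parameters forces the two zero-state responses to coincide for every input $u$, so $Q$ and $\bar{Q}$ are indistinguishable. For the reverse direction, suppose $Q$ and $\bar{Q}$ are indistinguishable and fix initial conditions $x_0, \bar{x}_0$ realizing output equality for all $u$. Evaluating at $u \equiv 0$ shows the zero-input responses agree; subtracting this from the general identity leaves the two zero-state responses equal for every $u$. Feeding in impulsive inputs $u(k) = \delta_{k,0}\, e_i$ then extracts $SC(A+BQC)^j BR = SC(A+B\bar{Q}C)^j BR$ for all $j$ and all $i$, which is exactly $F_Q = F_{\bar{Q}}$.

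The only point requiring care is this reverse direction, where the definition of indistinguishability permits nonzero initial conditions $x_0, \bar{x}_0$, whereas the transfer matrix is a zero-state object. The resolution is that the output is affine in the input, with the zero-input response entering as an additive term independent of $u$; superposition therefore decouples the two contributions and lets me isolate the Markov parameters irrespective of the (unknown) initial conditions. Since the Markov parameters depend only on $Q$ and not on $x_0$, the conclusion $F_Q = F_{\bar{Q}}$ follows, which completes the equivalence and hence the proposition.
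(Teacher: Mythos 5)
Your proof is correct. The paper does not actually supply a proof of this proposition --- it states the result as ``straightforward'' following Grewal and Glover --- and your argument is precisely the standard one being alluded to: the output is affine in $u$, so superposition separates the zero-input and zero-state responses, indistinguishability is seen to be equivalent to equality of the Markov parameters $SC(A+BQC)^jBR$ (hence of the transfer matrices), and the proposition then follows by taking the contrapositive of the definition of identifiability. The two points that genuinely require care --- choosing $x_0=\bar{x}_0=0$ in the forward direction, and using $u\equiv 0$ plus impulses to decouple the unknown initial conditions from the Markov parameters in the reverse direction --- are both handled correctly.
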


Although Proposition \ref{theoremtransfer} provides a necessary and sufficient condition for topological identifiability, the condition involves the arbitrary matrix $\bar{Q}$. Hence, it is not clear how to verify the condition of Proposition \ref{theoremtransfer}. Instead, in this paper we want to establish conditions for topological identifiability in terms of the local system matrices $A$, $B$ and $C$ and the matrices $Q$, $R$ and $S$. This is formalized in the following problem. 
\begin{problem}
	\label{problem1}
	Find necessary and sufficient conditions on the node dynamics $A$, $B$, $C$, the external input/output matrices $R$, $S$ and the interconnection matrix $Q$ under which the topology of \eqref{system} is identifiable. 
\end{problem}

Our second goal is to identify $Q$ from input/output data.
\begin{problem}
	\label{problem2}	
	Develop a methodology to identify the interconnection matrix $Q$ from measurements of the input $u(t)$ and output $y(t)$ of system \eqref{system}.
\end{problem}

\section{Conditions for topological identifiability}
\label{sectionidentifiability}

In this section we state our solution to Problem \ref{problem1} by providing necessary and sufficient conditions for topological identifiability. We start by providing an overview of the results that are proven in this section. In the following table, ``\textbf{N}" denotes necessary 
and ``\textbf{S}" denotes sufficient.
\begin{center}
	\begin{tabular}{|c|c|}
		\hline 
		\small
		Thm. \ref{necsufconditions} & \small General \textbf{N}-\textbf{S} conditions \\
		\hline
		\small
		Thm. \ref{SRfullrank} & 
		\small \textbf{N} condition; also \textbf{S} if $R$ has full rank \\
		\hline
		\small
		Thm. \ref{theoremfullrank} & 
		\small \textbf{N} condition for homogeneous SISO networks \\
		\hline
		\small
		Thm. \ref{theoremSISO} & 
		\small \textbf{N}-\textbf{S} conditions for homog. SISO networks \\
		\hline
	\end{tabular}
\end{center}

For analysis purposes, we first rewrite the network transfer matrix $F_Q(z)$. Note that
\begin{equation*}
zI-A = (zI - A - BQC) + BQC.
\end{equation*}
Premultiplication by $(zI-A)^{-1}$ and postmultiplication by the matrix $(zI - A - B Q C)^{-1}$ yields
\begin{align*}
&(zI - A - B Q C)^{-1} = \\ &(zI-A)^{-1} + (zI-A)^{-1} B Q C (zI - A - B Q C)^{-1}.
\end{align*}
This means that
\begin{align*}
&C (zI - A - B Q C)^{-1} B = \\ &G(z) + G(z) Q C (zI - A - B Q C)^{-1} B,
\end{align*}
where $G(z) = C (zI-A)^{-1}B$ is a block diagonal matrix containing the transfer matrices of all node systems. Finally, by rearranging terms we obtain 
\begin{equation}
\label{relationI-GQ}
C (zI - A - B Q C)^{-1} B = \left( I-G(z)Q \right)^{-1} G(z).
\end{equation}
Note that the inverse of $I-G(z)Q$ exists as a rational matrix. Indeed, since $(zI-A)^{-1}$ is strictly proper we see that $\lim_{z \to \infty} (I-G(z)Q) = I$. Therefore, we conclude by \eqref{relationI-GQ} that the transfer matrix $F_Q(z)$ equals
\begin{equation}
\label{networktransfer2}
F_Q(z) = S\left( I-G(z)Q \right)^{-1} G(z)R.
\end{equation}
We remark that \eqref{networktransfer2} is an attractive representation of the network transfer matrix, since the matrices $A$, $B$ and $C$ describing the local system dynamics are grouped and contained in the transfer matrix $G(z)$.

\begin{remark}
	By \eqref{networktransfer2}, we see that the networked system \eqref{system} can be represented by the block diagram in Figure \ref{fig:blockdiagram}. Hence, the problem of topology identification can be viewed as the identification of the \emph{static output feedback gain} $Q$, assuming knowledge of the system $G(z)$ and the external input/output matrices $R$ and $S$.
	\begin{figure}[h!]
		\centering
		\includegraphics[width=.45\textwidth]{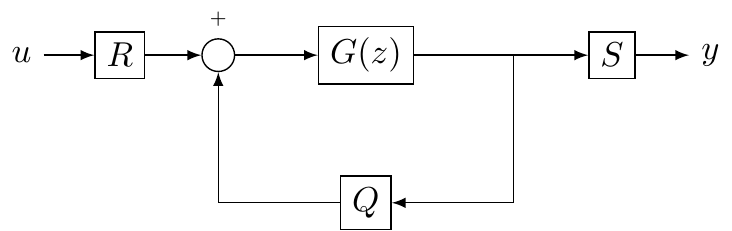}
		\label{fig:blockdiagram}
		\caption{Block diagram of the networked system \eqref{system}.}
	\end{figure}
\end{remark}

The following theorem gives necessary and sufficient conditions for topological identifiability. We will use the notation $G_i(z) := C_i (zI-A_i)^{-1}B_i$ to denote the transfer matrix from $v_i$ to $w_i$ of node system $i\in \mathcal{V}$.
\begin{theorem}
	\label{necsufconditions}
	Consider the networked system \eqref{system} and assume that the matrix $S$ has full column rank. The topology of \eqref{system} is identifiable if and only if
	\begin{equation}
	\label{ckernel}
	\cker \left( G_i(z) \otimes H_Q^\top(z) \right) = \{0\} \text{ for all } i \in \mathcal{V},
	\end{equation}
	where $H_Q(z) := \left( I-G(z)Q \right)^{-1} G(z)R$.
\end{theorem}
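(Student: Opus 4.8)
The plan is to reduce the identifiability criterion of Proposition~\ref{theoremtransfer} to an algebraic condition on a rational matrix product, and then to convert that product into the stated constant-kernel condition via a vectorization argument. By Proposition~\ref{theoremtransfer}, the topology is identifiable precisely when $F_Q(z) = F_{\bar Q}(z)$ forces $Q = \bar Q$. Writing $H_Q(z) = \left(I - G(z)Q\right)^{-1} G(z) R$ and using the representation \eqref{networktransfer2}, we have $F_Q(z) = S H_Q(z)$ and $F_{\bar Q}(z) = S H_{\bar Q}(z)$. Since $S$ has full column rank it admits a constant left inverse, so $F_Q = F_{\bar Q}$ is equivalent to $H_Q(z) = H_{\bar Q}(z)$ as rational matrices. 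Thus identifiability is equivalent to the implication $H_Q = H_{\bar Q} \implies Q = \bar Q$.

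The crux is to characterize the equality $H_Q(z) = H_{\bar Q}(z)$ in terms of $\Delta := \bar Q - Q$. Starting from the defining relations $(I - G Q) H_Q = G R$ and $(I - G\bar Q) H_{\bar Q} = G R$, I would first show that if $H_Q = H_{\bar Q}$ then subtracting the two relations yields $G\Delta H_Q = 0$. For the converse I would take $G\Delta H_Q = 0$ as a hypothesis and verify that $H_Q$ itself solves the $\bar Q$-equation: indeed $(I - G\bar Q)H_Q = (I - GQ)H_Q - G\Delta H_Q = GR - 0 = GR$, and since $\lim_{z\to\infty} G(z) = 0$ gives $\det(I - G\bar Q)\not\equiv 0$, the matrix $I - G\bar Q$ is invertible over the rationals, forcing $H_Q = (I - G\bar Q)^{-1} G R = H_{\bar Q}$. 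Hence $H_Q = H_{\bar Q}$ if and only if $G(z)\Delta H_Q(z) = 0$, and identifiability is equivalent to the statement that $G(z)\Delta H_Q(z) = 0$ implies $\Delta = 0$. I expect this equivalence, and in particular making the invertibility argument rigorous over the field of rational functions, to be the main point requiring care.

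It then remains to decouple this condition node-by-node and vectorize it. Because $G(z) = \bigoplus_i G_i(z)$ is block diagonal, the $i$-th block row of $G\Delta H_Q$ equals $G_i(z)\,\Delta_{i,:}\,H_Q(z)$, where $\Delta_{i,:} = (\Delta_{i1},\dots,\Delta_{iN})$ is the $i$-th block row of $\Delta$. Consequently $G\Delta H_Q = 0$ holds if and only if $G_i(z)\,\Delta_{i,:}\,H_Q(z) = 0$ for every $i \in \mathcal V$, and since these constraints involve disjoint blocks of $\Delta$, a nonzero $\Delta$ with $G\Delta H_Q = 0$ exists if and only if for some $i$ there is a nonzero $\Delta_{i,:}$ with $G_i\,\Delta_{i,:}\,H_Q = 0$. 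Finally I would apply the identity $\vect(A X B) = (B^\top \otimes A)\vect(X)$ to the transpose $\left(G_i\,\Delta_{i,:}\,H_Q\right)^\top = H_Q^\top\,\Delta_{i,:}^\top\,G_i^\top$, which turns $G_i\,\Delta_{i,:}\,H_Q = 0$ into $\left(G_i(z)\otimes H_Q^\top(z)\right)\vect(\Delta_{i,:}^\top) = 0$. Since $\vect(\Delta_{i,:}^\top)$ is a constant real vector, the existence of such a nonzero $\Delta_{i,:}$ is exactly the statement $\cker\!\left(G_i(z)\otimes H_Q^\top(z)\right)\neq\{0\}$. Negating, identifiability holds if and only if $\cker\!\left(G_i(z)\otimes H_Q^\top(z)\right) = \{0\}$ for all $i\in\mathcal V$, which is \eqref{ckernel}. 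The order of the Kronecker factors is immaterial here, as the two orderings are related by constant permutation (commutation) matrices, which preserve triviality of the constant kernel.
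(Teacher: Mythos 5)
Your proposal is correct and follows essentially the same route as the paper's proof: reduce $F_Q = F_{\bar Q}$ to $H_Q = H_{\bar Q}$ using the full column rank of $S$, show this is equivalent to $G(z)\Delta H_Q(z) = 0$, and then transpose, vectorize, and exploit the block-diagonal structure of $G(z)$ to arrive at the per-node constant-kernel condition. The only differences are presentational (you decouple by block rows before vectorizing rather than after, and you spell out both directions of the equivalence $H_Q = H_{\bar Q} \Leftrightarrow G\Delta H_Q = 0$, which the paper compresses into a chain of equivalent statements).
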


\begin{proof}
	Suppose that $F_Q(z)=F_{\bar{Q}}(z)$, where $\bar{Q}$ is real. Then, from \eqref{networktransfer2} we have
	\begin{equation*}
	S\left( I-G(z)Q \right)^{-1} G(z) R = S\left( I-G(z)\bar{Q} \right)^{-1} G(z)R.
	\end{equation*}
	By hypothesis, $S$ has full column rank and hence
	\begin{equation}
	\label{eqQQbar}
	\left( I-G(z)Q \right)^{-1} G(z) R = \left( I-G(z)\bar{Q} \right)^{-1} G(z)R.
	\end{equation}
	
	We define $\Delta := Q-\bar{Q}$. Then, \eqref{eqQQbar} is equivalent to each of the following statements:
	\begin{align*}
	\left( I-G(z)\bar{Q} \right) \left( I-G(z)Q \right)^{-1} G(z) R &= G(z) R \\
	\left( I-G(z)(Q - \Delta) \right) \left( I-G(z)Q \right)^{-1} G(z) R &= G(z) R \\
	G(z) \Delta \left( I-G(z)Q \right)^{-1} G(z)R &= 0 \\
	G(z) \Delta H_Q(z) &= 0.
	\end{align*}
	Equivalently, 
	\begin{equation}
	\label{eqGDeltaG} 
	H_Q^\top(z) \Delta^\top G^\top(z) = 0.
	\end{equation}
	Next, let $\vect(M)$ denote the vectorization of a matrix $M$. Then \eqref{eqGDeltaG} is equivalent to
	\begin{equation}
	\label{eqvect}
	(G(z) \otimes H_Q^\top(z)) \vect(\Delta^\top) = 0.
	\end{equation}
	By \eqref{eqvect} it is clear that the topology of \eqref{system} is identifiable if and only if the constant kernel of $G(z) \otimes H_Q^\top(z)$ is zero. Finally, by the block diagonal structure of $G(z)$, this is equivalent to \eqref{ckernel} which proves the theorem.
\end{proof}

By Theorem \ref{necsufconditions}, topological identifiability is equivalent to the matrices $G_i(z) \otimes H_Q^\top(z)$ having zero constant kernel. Note that this condition generally depends on the -a priori unknown- matrix $Q$. Notably, identifiability is \emph{independent} of the particular matrix $Q$ whenever all node inputs are excited and all node outputs are measured, as stated in the following theorem. 

\begin{theorem}
	\label{SRfullrank}
	Consider the networked system \eqref{system}. If the topology of \eqref{system} is identifiable then
	\begin{equation}
	\label{constantkernel}
	\cker \left(G_i^\top(z) \otimes G_j(z)\right) = \{0\}
	\end{equation}
	for all $i,j \in \mathcal{V}$. In addition, suppose that $S$ has full column rank and $R$ has full row rank. Then the topology of \eqref{system} is identifiable \emph{if and only if} \eqref{constantkernel} holds. 
\end{theorem}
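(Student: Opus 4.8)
The plan is to reduce both assertions to the compact criterion of Theorem~\ref{necsufconditions}, after first rewriting \eqref{constantkernel} in a basis-free form.

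First I would record the following reformulation. By the identity $\vect(AXB) = (B^\top \otimes A)\vect(X)$, a vector lies in $\cker\big(G_i^\top(z) \otimes G_j(z)\big)$ exactly when it equals $\vect(M)$ for some constant $M$ with $G_j(z) M G_i(z) = 0$ identically in $z$. Since $G(z) = \bigoplus_k G_k(z)$ is block diagonal, the $(k,l)$ block of $G(z) M G(z)$ is $G_k(z) M_{kl} G_l(z)$, where $M_{kl}$ is the corresponding block of $M$. Hence \eqref{constantkernel} holding for all $i,j \in \mathcal{V}$ is equivalent to the single statement that, for every constant $M$,
\begin{equation*}
G(z) M G(z) = 0 \text{ for all } z \implies M = 0.
\end{equation*}
This is the form I would work with.

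For the necessity part (no rank hypotheses) I would argue by contraposition and exhibit an indistinguishable $\bar Q$. Suppose \eqref{constantkernel} fails, i.e.\ there is a constant $M \neq 0$ with $G(z)MG(z) = 0$. Put $\bar Q := Q - M \neq Q$ and $\Delta := Q - \bar Q = M$. The resolvent identity $(I-GQ)^{-1} - (I-G\bar Q)^{-1} = (I-GQ)^{-1} G\Delta (I-G\bar Q)^{-1}$ applied to \eqref{networktransfer2} gives
\begin{equation*}
F_Q(z) - F_{\bar Q}(z) = S(I-GQ)^{-1} \big[\, G\Delta (I-G\bar Q)^{-1} G \,\big] R,
\end{equation*}
so it suffices to show the bracketed factor vanishes. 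Writing $\bar P := (I-G\bar Q)^{-1}$ and using $\bar P = I + G\bar Q \bar P$, I get $GM\bar P G = GMG + GMG\,\bar Q \bar P G = GMG(I + \bar Q \bar P G)$, which is zero because $GMG = 0$. Thus $F_Q = F_{\bar Q}$ with $\bar Q \neq Q$, and by Proposition~\ref{theoremtransfer} the topology is not identifiable. Since no properties of $S$ or $R$ were used, this matches the statement and proves the first claim.

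For the equivalence under the rank hypotheses I would invoke Theorem~\ref{necsufconditions}, which (as $S$ has full column rank) states that identifiability is equivalent to $G(z)\Delta H_Q(z) = 0 \implies \Delta = 0$, with $H_Q = (I-GQ)^{-1}GR$. The goal is to strip $H_Q$ down to $G$ in two steps. Because $R$ has full row rank it admits a right inverse $R^\dagger$ with $RR^\dagger = I$; right-multiplying by $R$ resp.\ $R^\dagger$ then shows $G\Delta H_Q = 0 \iff G\Delta (I-GQ)^{-1} G = 0$. Next, with $P := (I-GQ)^{-1} = I + GQP$ the same factorization as above yields $G\Delta(I-GQ)^{-1}G = G\Delta G\,(I + QPG)$; since $QPG = Q(I-GQ)^{-1}G$ is strictly proper, $I+QPG$ equals $I$ at $z=\infty$ and is invertible as a rational matrix, so $G\Delta(I-GQ)^{-1}G = 0 \iff G\Delta G = 0$. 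Chaining these equivalences, identifiability is equivalent to $G\Delta G = 0 \implies \Delta = 0$, which by the first step is precisely \eqref{constantkernel}.

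The step I expect to be the crux is the factorization $G\Delta(I-GQ)^{-1}G = G\Delta G(I + QPG)$ (and its $\bar Q$ analogue in the necessity argument): it is what collapses the rational resolvent sandwiched between the two copies of $G$, thereby converting the $Q$-dependent condition of Theorem~\ref{necsufconditions} into the $Q$-independent condition \eqref{constantkernel}. The essential technical point is to verify that the cofactor $I + QPG$ is invertible as a rational matrix, which follows from evaluating it at $z = \infty$.
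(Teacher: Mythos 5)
Your proof is correct. For the equivalence under the rank hypotheses you follow essentially the same route as the paper: strip off $S$ and $R$ using their full rank, reduce $F_Q=F_{\bar Q}$ to $G\Delta(I-GQ)^{-1}G=0$, and collapse the sandwiched resolvent. The only cosmetic difference is that the paper uses the push-through identity $(I-GQ)^{-1}G = G(I-QG)^{-1}$ and then cancels the invertible rational factor $(I-QG)^{-1}$, whereas you factor out $I+Q(I-GQ)^{-1}G$ and check its invertibility at $z=\infty$; these are the same object, since $I+Q(I-GQ)^{-1}G=(I-QG)^{-1}$. The genuinely valuable difference is your treatment of the first claim: the paper's printed proof opens with ``we first prove the second statement'' and then never returns to the necessity assertion (which holds with no rank assumptions on $S$ or $R$), so your contrapositive construction of $\bar Q = Q - M$ with $GMG=0$, together with the resolvent identity showing $F_Q-F_{\bar Q}=S(I-GQ)^{-1}\,GM(I-G\bar Q)^{-1}G\,R=0$, supplies an argument the paper omits. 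Your preliminary vectorization step identifying \eqref{constantkernel} for all $i,j$ with the single implication $GMG=0\Rightarrow M=0$ is also exactly the block-diagonal reduction the paper gestures at in its last sentence.
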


The importance of Theorem \ref{SRfullrank} lies in the fact that the identifiability condition \eqref{constantkernel} can be verified \emph{without knowledge} of $Q$. This means that, whenever the rank conditions on $S$ and $R$ hold, one can check for topological identifiability before collecting data from the system.  %

\begin{remark}
	A proper transfer matrix $T(z)$ has constant kernel $\{0\}$ if and only if the matrix $\col(M_0,M_1,\dots,M_r)$ has full column rank. Here $M_0,M_1,\dots,M_r$ are the Markov parameters of $T(z)$ and $r$ is greater or equal to the order of $T(z)$. As such, the conditions of Theorems \ref{necsufconditions} and \ref{SRfullrank} can be verified by computing the rank of the Markov parameter matrices associated to the transfer matrices in \eqref{ckernel} and \eqref{constantkernel}.
\end{remark}

\begin{proof}
	We first prove the second statement. Suppose that $S$ has full column rank and $R$ has full row rank. Then $F_Q(z) = F_{\bar{Q}}(z)$ is equivalent to 
	\begin{equation*}
	\left( I-G(z)Q \right)^{-1} G(z) = \left( I-G(z)\bar{Q} \right)^{-1} G(z).
	\end{equation*}
	We define $\Delta := Q - \bar{Q}$. Then, $F_Q(z) = F_{\bar{Q}}(z)$ is equivalent to
	$$
	G(z)\Delta (I-G(z)Q)^{-1}G(z) = 0,
	$$
	In other words, $G(z)\Delta G(z) (I-QG(z))^{-1} = 0$. This in turn is equivalent to $G(z) \Delta G(z) = 0$. In other words, $\left( G^\top(z) \otimes G(z) \right) \vect(\Delta) = 0$. Exploiting the block diagonal structure of $G(z)$, we conclude that the topology of \eqref{system} is identifiable if and only if \eqref{constantkernel} holds.
\end{proof}


A consequence of Theorem \ref{SRfullrank} is that identifiability of the topology of \eqref{system} implies that the constant kernel of both $G_i^\top(z)$ and $G_i(z)$ is zero for all $i \in \mathcal{V}$. Based on this fact, we relate topological identifiability and output controllability of the node systems. 
\begin{dfn}
	Consider the system
	\begin{equation}
	\begin{aligned}
	\label{linsys}
	x(t+1) &= A x(t) + B u(t) \\
	y(t) &= C x(t),
	\end{aligned}
	\end{equation}
	where $x \in \mathbb{R}^n$, $u \in \mathbb{R}^m$ and $y \in \mathbb{R}^p$, and let $y_{u,x_0}( \cdot )$ denote the output trajectory of \eqref{linsys} for a given initial condition $x_0$ and input $u( \cdot )$. System \eqref{linsys} is called \emph{output controllable} if for every $x_0 \in \mathbb{R}^{n}$ and $y_1 \in \mathbb{R}^{p}$ there exists an input $u( \cdot )$ and time instant $T \in \mathbb{N}$ such that $y_{x_0,u}(T) = y_1$. 
\end{dfn}

\begin{corollary}
	\label{coroutputcont}
	If the topology of \eqref{system} is identifiable then the systems $(A_i,B_i,C_i)$ and $(A_i^\top,C_i^\top,B_i^\top)$ are output controlla\-ble for all $i \in \mathcal{V}$.
\end{corollary}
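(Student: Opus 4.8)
The plan is to convert the corollary into two equivalences stated in terms of the Markov parameters of the node transfer matrices. The starting point is the consequence of Theorem~\ref{SRfullrank} recorded just above the statement: if the topology of \eqref{system} is identifiable, then $\cker G_i(z) = \{0\}$ and $\cker G_i^\top(z) = \{0\}$ for every $i \in \mathcal{V}$ (this follows by setting $i = j$ in \eqref{constantkernel} and noting that a trivial constant kernel of $G_i^\top(z) \otimes G_i(z)$ forces trivial constant kernels of both factors). It then suffices to show that $\cker G_i^\top(z) = \{0\}$ is equivalent to output controllability of $(A_i,B_i,C_i)$, and that $\cker G_i(z) = \{0\}$ is equivalent to output controllability of $(A_i^\top,C_i^\top,B_i^\top)$.

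First I would recall the Markov-parameter characterization of output controllability. From $y_{x_0,u}(T) = C_iA_i^T x_0 + \sum_{k=0}^{T-1} C_iA_i^{\,T-1-k}B_i\, u(k)$, the outputs reachable at time $T$ from a fixed initial state form an affine translate of the column space of the matrix $\begin{pmatrix} C_iB_i & C_iA_iB_i & \cdots & C_iA_i^{T-1}B_i \end{pmatrix}$. Hence $(A_i,B_i,C_i)$ is output controllable exactly when this matrix has full row rank, and by Cayley--Hamilton one may stop at $T = n_i$. Since the blocks $C_iA_i^kB_i$ are precisely the Markov parameters $M_k$ of $G_i(z)$, output controllability of $(A_i,B_i,C_i)$ amounts to the row block $\begin{pmatrix} M_0 & M_1 & \cdots & M_{n_i-1} \end{pmatrix}$ having full row rank.

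The final step is transposition. The row block above has full row rank if and only if its transpose $\col(M_0^\top, M_1^\top, \dots, M_{n_i-1}^\top)$ has full column rank, and by the Remark following Theorem~\ref{SRfullrank} (with Cayley--Hamilton used to truncate the Markov sequence) this is equivalent to triviality of the constant kernel of the transfer matrix whose Markov parameters are $M_k^\top = B_i^\top(A_i^\top)^k C_i^\top$, which is exactly $G_i^\top(z)$. This establishes the equivalence for $(A_i,B_i,C_i)$. Repeating the argument for $(A_i^\top,C_i^\top,B_i^\top)$, whose transfer matrix is $G_i^\top(z)$, the relevant condition becomes triviality of the constant kernel of the transpose of $G_i^\top(z)$, i.e.\ $\cker G_i(z) = \{0\}$, giving the second equivalence. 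Combining both equivalences with the two constant-kernel conditions inherited from identifiability proves the corollary. The sole point requiring care is the bookkeeping of transposes: output controllability is a full-\emph{row}-rank condition on the row block of Markov parameters, whereas the constant-kernel criterion is a full-\emph{column}-rank condition on their column stack, which is why $(A_i,B_i,C_i)$ pairs with $\cker G_i^\top(z)$ rather than with $\cker G_i(z)$. No estimates enter; the proof is a chain of equivalences.
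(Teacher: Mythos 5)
Your proposal is correct and follows essentially the same route as the paper: both extract $\cker G_i^\top(z)=\{0\}$ and $\cker G_i(z)=\{0\}$ from Theorem~\ref{SRfullrank}, truncate the Markov sequence via Cayley--Hamilton, and identify full row rank of $\begin{pmatrix} C_iB_i & \cdots & C_iA_i^{n_i-1}B_i\end{pmatrix}$ with output controllability. The only cosmetic difference is that you route the argument through the Remark on constant kernels and an explicit transposition step, whereas the paper argues directly that $w^\top G_i(z)=0$ forces $w^\top C_iA_i^kB_i=0$ for all $k$.
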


\begin{proof}
	By Theorem \ref{SRfullrank}, identifiability of the topology of \eqref{system} implies that the constant kernel of $G_i^\top(z)$ is zero for all $i \in \mathcal{V}$. Now, for $w \in \mathbb{R}^{p_i}$ we have $w^\top G_i(z) = 0$ if and only if $w^\top C_i A_i^k B_i = 0$ for all $k = 0,1,\dots$, equivalently, $w^\top C_i A_i^k B_i = 0$ for all $k = 0,1,\dots,n_i-1$. Hence, 
	\begin{equation*}
	w^\top \begin{pmatrix}
	C_i B_i & C_i A_i B_i & \cdots & C_i A_i^{n-1} B_i
	\end{pmatrix} = 0 \implies w = 0.
	\end{equation*}
	The latter implication holds if and only if the output controllability matrix of $(A_i,B_i,C_i)$ has full row rank, equivalently $(A_i,B_i,C_i)$ is output controllable \cite[Ex. 3.22]{Trentelman2001}. The proof for the ne\-cessity of output controllability of $(A_i^\top,C_i^\top,B_i^\top)$ is analogous and hence omitted.
\end{proof}

\begin{remark}
	Output controllability of $(A_i,B_i,C_i)$ can be interpreted as an `excitability' condition. Indeed, it guarantees that we have enough freedom in steering the output $w_i(t)$ of each node $i \in \mathcal{V}$.
\end{remark}

\begin{example}
	\label{example1}
	We will now illustrate Theorems \ref{necsufconditions} and \ref{SRfullrank}. Consider a network of $N = 10$ oscillators of the form
	\begin{align*}
	x_i(t+1) &= \begin{pmatrix}
	\cos\theta_i & \sin\theta_i \\
	-\sin\theta_i & \cos\theta_i
	\end{pmatrix} x_i(t) + \begin{pmatrix}
	1 \\ 0
	\end{pmatrix}
	v_i(t) \\
	w_i(t) &= \begin{pmatrix}
	1 & 0
	\end{pmatrix}x_i(t),
	\end{align*}
	where $\theta_i \in \mathbb{R}$ is a constant, given by $\theta_i = (0.2+0.01 i)\pi$ for $i = 1,2,\dots,N$. The network topology is a \emph{cycle graph} $\mathcal{G} = (\mathcal{V},\mathcal{E})$ (with self-loops), defined by $\mathcal{V} := \{1,2,\dots,N\}$ and $\mathcal{E} := \{(i,j) \mid i-j \equiv -1,0,1 (\modulo N) \}$. Here $\modulo$ denotes the modulo operation and $\equiv$ denotes congruence. The network nodes are diffusively coupled, and an external input is applied to node 1, that is, 
	$$
	v_i(t) = \begin{cases}
	\frac{1}{2} \sum_{j \in \mathcal{N}_i} (w_j(t)-w_i(t)) + u(t) & \text{if } i = 1 \\
	\frac{1}{2} \sum_{j \in \mathcal{N}_i} (w_j(t)-w_i(t)) & \text{otherwise},
	\end{cases} 
	$$
	where $\mathcal{N}_i := \{ j \mid (j,i) \in \mathcal{E} \}$. This means that the interconnection matrix $Q$ is defined element-wise as
	$$
	Q_{ij} = \begin{cases}
	1 & \text{if } i = j \\
	-\frac{1}{2} & \text{if } i \neq j \text{ and } (j,i) \in \mathcal{E} \\
	0 & \text{otherwise.} 
	\end{cases}
	$$
	Since we only externally influence the first node system, the corresponding matrix $R$ is given by the first column of $I$. We assume that we externally measure all node outputs, meaning that $S = I$. 
	
	Using Theorem \ref{necsufconditions}, we want to show that the topology of \eqref{system} is identifiable. First, note that the transfer function $G_i(z)$ of node system $i$ is given by 
	$$
	G_i(z) = \frac{z - \cos \theta_i}{z^2 - 2 z \cos \theta_i +1},
	$$
	which is nonzero for all $i \in \mathcal{V}$. Since $G_i(z)$ is scalar, Theorem \ref{necsufconditions} implies that the topology of \eqref{system} is identifiable if and only if $\cker H^\top_Q(z) = \{0\}$. This is equivalent to the output controllability of the system $(A+BQC,BR,C)$. It can be easily verified that the output controllability matrix 
	$$
	\begin{pmatrix}
	CBR & C(A+BQC)BR & \cdots & C(A+BQC)^{N-1}BR
	\end{pmatrix}
	$$
	has full row rank. We therefore conclude by Theorem \ref{necsufconditions} that the topology of \eqref{system} is identifiable. Note that the rank of the output controllability matrix (and hence, identifiability) depends on the interconnection matrix $Q$. 
	
	Next, we discuss the scenario in which $R = I$. In this case, we can externally influence all nodes. Now, identifiability can be checked without knowledge of $Q$. In fact, by Theorem \ref{SRfullrank}, the topology of \eqref{system} is identifiable if and only if $\cker \left(G_i^\top(z) \otimes G_j(z)\right) = \{0\}$. This condition is satisfied, since all local transfer functions are nonzero scalars.
\end{example}

So far, we have provided a general condition for identifiability in Theorem \ref{necsufconditions}, and we have discussed some of the implications of this result in Theorem \ref{SRfullrank} and Corollary \ref{coroutputcont}. However, possible criticism of the results may arise from the full rank condition on $S$ in Theorem \ref{necsufconditions}, which, until now, has been left rather unjustified. 

It turns out that full column rank of $S$ (or the dual, full row rank of $R$) is \emph{necessary} for topological identifiability in case the networked system is homogeneous and SISO. For this important class of networked systems, the rank condition on $S$ in Theorem \ref{necsufconditions} is hence not restrictive. 

\begin{theorem}
	\label{theoremfullrank}
	Consider a homogeneous SISO network, that is, a system of the form \eqref{system} with $m_i = p_i = 1$ and $A_i = A_0$, $B_i = B_0$ and $C_i = C_0$ for all $i \in \mathcal{V}$. If the topology of \eqref{system} is identifiable then $\rank S = N$ or $\rank R = N$. 
\end{theorem}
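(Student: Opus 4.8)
The claim is that for a homogeneous SISO network, identifiability forces $\rank S = N$ or $\rank R = N$. The plan is to prove the contrapositive: assuming both $\rank S < N$ and $\rank R < N$, I would construct a nonzero $\Delta$ that is indistinguishable, i.e.\ a perturbation $\bar Q = Q - \Delta$ with $F_Q(z) = F_{\bar Q}(z)$, thereby violating identifiability. In the homogeneous SISO case every node has the \emph{same} scalar transfer function $g(z) := C_0(zI-A_0)^{-1}B_0$, so the block-diagonal matrix simplifies dramatically: $G(z) = g(z) I_N$. This scalar factorization is the structural feature that makes the homogeneous SISO case special, and it is what I would exploit throughout.

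\textbf{Reducing to a linear-algebraic condition.} Since $G(z) = g(z) I_N$, the transfer matrix becomes $F_Q(z) = g(z)\, S (I - g(z) Q)^{-1} R$. Setting $\lambda := g(z)$ and treating $\lambda$ as a scalar indeterminate ranging over an infinite set of values (as $z$ varies), the condition $F_Q = F_{\bar Q}$ becomes a rational identity in $\lambda$. Following the computation in the proof of Theorem~\ref{necsufconditions}, indistinguishability of $Q$ and $\bar Q = Q - \Delta$ reduces to
\begin{equation*}
S\,(I - \lambda Q)^{-1}\, \Delta\, (I - \lambda Q)^{-1} R = 0
\end{equation*}
as an identity in $\lambda$. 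So the task is purely linear-algebraic: find a nonzero $N \times N$ matrix $\Delta$ satisfying this identity, given only that $S$ has a nontrivial left null space and $R$ has a nontrivial right null space.

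\textbf{Constructing the perturbation.} Because $\rank R < N$, there is a nonzero vector $r$ with $R^\top r = 0$; dually, because $\rank S < N$, there is a nonzero $s$ with $s^\top S = 0$ (i.e.\ $S^\top s = 0$). The natural guess is to build $\Delta$ from these two null vectors so that the identity collapses. I would try a rank-one ansatz of the form $\Delta = (I - \lambda_0 Q)\, a\, b^\top (I - \lambda_0 Q)$ engineered so that $b$ feeds into the kernel of $R$ and $a$ is annihilated by $S$—but the cleanest route is to note that $S(I-\lambda Q)^{-1}$ and $(I-\lambda Q)^{-1}R$ each miss a direction for \emph{generic} $\lambda$. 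Concretely, I expect to define $\Delta := p\, q^\top$ where $q$ is chosen in the right null space of $R$ (so $q^\top$ composed appropriately kills the $R$ factor) and $p$ is chosen so that $S$ composed with the left factor annihilates it. The verification then amounts to checking the scalar rational identity term-by-term, using $R^\top r = 0$ and $S^\top s = 0$ together with the fact that $(I-\lambda Q)^{-1}$ expands as a power series $\sum_k \lambda^k Q^k$.

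\textbf{The main obstacle.} The delicate point is that $\Delta$ must be \emph{constant} (independent of $\lambda$), while the null-space vectors of $S$ and $R$ interact with the $\lambda$-dependent resolvent $(I-\lambda Q)^{-1}$. A naive rank-one product of fixed null vectors will generally \emph{not} satisfy the identity for all $\lambda$, because $Q$ rotates these directions. The hard part will be accounting for this coupling—showing that one can still produce a nonzero constant $\Delta$ despite $Q$ mixing the kernels. I anticipate the resolution uses a dimension count: the map $\Delta \mapsto S(I-\lambda Q)^{-1}\Delta(I-\lambda Q)^{-1}R$, viewed as a linear map on the $N^2$-dimensional space of matrices into a space of rational matrices, has nontrivial kernel whenever the left factor $S(I-\lambda Q)^{-1}$ and right factor $(I-\lambda Q)^{-1}R$ fail to have full rank generically, which is exactly guaranteed by $\rank S < N$ and $\rank R < N$. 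Establishing that this kernel is nonzero—rather than merely plausible—is where the real argument lies, and I would expect to make it rigorous by exhibiting an explicit nonzero $\Delta$ and verifying the identity directly.
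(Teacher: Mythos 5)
Your overall strategy (prove the contrapositive, exploit the fact that homogeneity and the SISO assumption force $G(z) = g(z)I$, and exhibit an indistinguishable $\bar{Q}$) is the right one, but the proposal has a genuine gap: the perturbation is never actually constructed, and the two fallback arguments you sketch both fail. First, the reduction is not quite the one you state: writing $\bar{Q} = Q - \Delta$ and using the resolvent identity, the condition $F_Q(z) = F_{\bar{Q}}(z)$ becomes $S(I-\lambda Q)^{-1}\Delta(I-\lambda \bar{Q})^{-1}R = 0$, with $Q$ in one resolvent and $\bar{Q}$ in the other — not $Q$ in both. This makes the condition \emph{nonlinear} in $\Delta$, so the map you propose to analyze is not the relevant one. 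Second, even granting linearity, the dimension count does not close: the identity must hold for every $\lambda$, which (after expanding in powers of $\lambda$ and invoking Cayley--Hamilton) imposes on the order of $2N-1$ matrix equations of size $p \times m$ on the $N^2$ unknowns of $\Delta$ — generically far more than $N^2$ scalar constraints — so a nontrivial common kernel is not guaranteed by counting alone. Indeed, the paper's two-node example after Theorem \ref{theoremSISO} shows an identifiable network with both $S$ and $R$ rank-deficient, so any argument resting only on rank deficiency of the two factors would prove too much. You correctly flag that a naive rank-one $\Delta = v_1 v_2^\top$ built from null vectors of $S$ and $R$ fails because $Q$ mixes these directions (the first obstruction already appears at order $\lambda^2$, where $SQv_1 \cdot v_2^\top QR \neq 0$ in general), but identifying the obstacle is not the same as overcoming it.

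The paper's resolution is a similarity-transformation trick that you do not reach. Take $v_1, v_2 \neq 0$ with $Sv_1 = 0$ and $v_2^\top R = 0$ (normalized so that $v_2^\top v_1 \neq -1$), and set $T := I + v_1 v_2^\top$. Then $T$ is invertible by Sherman--Morrison, and crucially $TR = R$ and $ST^{-1} = S$, so $\bar{Q} := T^{-1}QT$ yields a realization of $F_{\bar{Q}}$ that is similar (via $T \otimes I$) to that of $F_Q$; hence $F_Q(z) = F_{\bar{Q}}(z)$, and if $\bar{Q} \neq Q$ identifiability fails. The resulting $\Delta = Q - T^{-1}QT$ is \emph{not} a rank-one product of the null vectors, which is why your ansatz cannot be patched directly. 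There is also a degenerate case you would still have to handle: if $T^{-1}QT = Q$, the conjugation produces nothing new, and the paper shows this forces $v_1$ to be an eigenvector of $Q$ lying in $\ker S$, so that $(S,Q)$ is unobservable and Lemma \ref{lemmacont} (observability of $(S,Q)$ and controllability of $(Q,R)$ are necessary for identifiability) delivers the contradiction. Both the conjugation idea and the separate observability argument for the degenerate case are missing from your proposal.
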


\begin{remark}
	Theorem \ref{theoremfullrank} generalizes several known results (see \cite{Pare2013,vanWaarde2019,vanWaarde2019b}) for networks of single-integrators. Indeed, in the special case that $A_0 = 0$, $B_0 = C_0 = 1$, the node output $w_i(t)$ equals the node state $x_i(t)$ for all $i \in \mathcal{V}$, and Theorem \ref{theoremfullrank} asserts that either full state measurement or full state excitation is necessary for identifiability. This fact has been observed in different setups in \cite[Thm. 1]{Pare2013}, \cite[Rem. 2]{vanWaarde2019}, and \cite[Thm. 5]{vanWaarde2019b}. 
\end{remark}%

Before proving Theorem \ref{theoremfullrank}, we state the following lemma.


\begin{lemma}
	\label{lemmacont}
	Suppose that $m_i = p_i = 1$ and $A_i = A_0$, $B_i = B_0$ and $C_i = C_0$ for all $i \in \mathcal{V}$. If the topology of \eqref{system} is identifiable then $(Q,R)$ is controllable and $(S,Q)$ is observable. 
\end{lemma}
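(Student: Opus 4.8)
The plan is to establish the contrapositive of each of the two assertions separately. The starting point is that for a homogeneous SISO network the block-diagonal transfer matrix collapses to $G(z) = g(z) I_N$, where $g(z) := C_0(zI - A_0)^{-1}B_0$ is a scalar rational function. Since $g(z)I_N$ commutes with every matrix, substituting into \eqref{networktransfer2} gives the simplified form $F_Q(z) = g(z)\,S\,(I - g(z)Q)^{-1}R$. Read as a rational function of the scalar $g(z)$, this has the algebraic form of the transfer matrix of the discrete-time triple $(Q,R,S)$ with $Q$ as state matrix; the guiding idea is therefore the classical realization-theoretic fact that uncontrollable and unobservable modes leave no trace in the transfer matrix, so that such modes can be altered without being detected in the input/output behaviour.

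First I would treat controllability. Assume $(Q,R)$ is not controllable, so its controllability matrix has rank $r < N$. A Kalman controllability decomposition supplies a real invertible $T$ such that $\tilde Q := TQT^{-1} = \begin{pmatrix} Q_{11} & Q_{12} \\ 0 & Q_{22} \end{pmatrix}$ and $\tilde R := TR = \col(R_1,0)$, with $Q_{22}$ a nonempty $(N-r)\times(N-r)$ block. Using the block-triangular structure one checks that $(I - g\tilde Q)^{-1}\tilde R = \col\big((I - gQ_{11})^{-1}R_1,\,0\big)$, so that $F_Q(z) = g\,(ST^{-1})\,\col\big((I-gQ_{11})^{-1}R_1,0\big)$ depends on $Q$ only through $Q_{11}$ and $R_1$. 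Consequently, replacing $Q_{22}$ by any $\bar Q_{22}\neq Q_{22}$ and transforming back yields a real $\bar Q\neq Q$ with $F_Q(z) = F_{\bar Q}(z)$. By Proposition \ref{theoremtransfer} the topology is then not identifiable, which is the required contrapositive.

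Next I would obtain observability of $(S,Q)$ by a duality argument, avoiding a repetition of the computation. Because $S Q^k R = \big(R^\top (Q^\top)^k S^\top\big)^\top$ for every $k$, and $F_Q(z)$ is determined by these generalized Markov parameters, passing to the transposed data $(Q^\top, S^\top, R^\top)$ turns observability of $(S,Q)$ into controllability of $(Q^\top,S^\top)$ while preserving the transfer matrix up to transposition. Applying the controllability step to the transposed system shows that if $(S,Q)$ is unobservable then there is a real $\bar Q\neq Q$ with $F_Q(z)=F_{\bar Q}(z)$, again contradicting identifiability through Proposition \ref{theoremtransfer}.

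The step I expect to require the most care is the verification that the uncontrollable block truly drops out of $F_Q(z)$, i.e.\ that $(I - g\tilde Q)^{-1}\tilde R$ has vanishing lower block for all $z$; this rests on the zero $(2,1)$-block created by the decomposition and is the only place where the structure of $T$ is used. One should also confirm that $g$ being a genuine scalar (a consequence of the SISO, homogeneous assumption) is what makes the commutation of $g(z)I_N$ available and hence legitimizes writing $F_Q$ in the triple form; the remaining manipulations are routine bookkeeping around the Kalman decomposition and the transpose symmetry.
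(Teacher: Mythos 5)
Your proof is correct, but it follows a genuinely different route from the paper's. The paper works with the formal series expansion $F_Q(z) = S\bigl(\sum_{k=0}^\infty (QG_0(z))^k\bigr)G_0(z)R$ and constructs an explicit rank-one perturbation: taking $v\neq 0$ in the unobservable subspace of $(S,Q)$, it observes that $SQ^k = S(Q+vv^\top)^k$ for all $k$ (every term of the expansion containing a factor $vv^\top$ is killed by $SQ^{j-1}v=0$), so $\bar Q := Q+vv^\top$ is indistinguishable from $Q$; the controllability half is declared analogous. You instead exploit the identity $F_Q(z)=g(z)S(I-g(z)Q)^{-1}R$ together with a Kalman controllability decomposition, verify that the uncontrollable block drops out of the resolvent $(I-g\tilde Q)^{-1}\tilde R$, perturb that block, and then obtain the observability half by transposing the whole argument. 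Both arguments are sound: your key computation with the block upper-triangular resolvent is correct (the zero $(2,1)$-block forces the lower block of $(I-g\tilde Q)^{-1}\tilde R$ to vanish identically), the similarity $X\mapsto T^{-1}XT$ is injective so $\bar Q\neq Q$, and the duality step is a legitimate transposition of the resolvent identity rather than a claim about realizing a transposed network. What the paper's approach buys is brevity and self-containedness (no decomposition machinery, and the indistinguishable $\bar Q$ is written down in one line); what yours buys is a cleaner conceptual link to classical realization theory, a larger family of indistinguishable perturbations (any modification of the uncontrollable block), and a genuine reuse of the first half via duality instead of an ``analogous and omitted'' remark.
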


\begin{proof}
	Suppose on the contrary that $(S,Q)$ is unobser\-vable. Let $v \in \mathbb{R}^N$ be a nonzero vector in the unobservable subspace of $(S,Q)$, i.e.,
	\begin{equation*}
	SQ^k v = 0 \text{ for all } k \in \mathbb{N}.
	\end{equation*}
	This implies that $SQ^k = S(Q+vv^\top)^k$ for all $k \in \mathbb{N}$. By \eqref{networktransfer2}, the network transfer matrix is given by 
	$$F_Q(z) = S(I-G_0(z)Q)^{-1}G_0(z)R,$$ 
	where $G_0(z) := C_0(zI-A_0)^{-1}B_0$ is a scalar transfer function. Next, by expanding $F_Q(z)$ as a formal series 
	\begin{equation*}
	F_Q(z) = S \left(\sum_{k=0}^\infty (Q G_0(z))^k \right) G_0(z)R,
	\end{equation*}
	it is clear that $F_Q(z) = F_{\bar{Q}}(z)$, where the matrix $\bar{Q}$ is defined as $\bar{Q} := Q + vv^\top$. Since $v \neq 0$, the matrices $Q$ and $\bar{Q}$ are distinct. Hence, the topology of \eqref{system} is not identifiable. The proof for necessity of controllability of $(Q,R)$ is analogous and therefore omitted. 
\end{proof}

\textbf{Proof of Theorem \ref{theoremfullrank}}: Suppose on the contrary that $\rank R < N$ and $\rank S < N$. Then there exist nonzero vectors $v_1,v_2 \in \mathbb{R}^N$ such that $Sv_1 = 0$ and $v_2^\top R = 0$. We assume without loss of generality that $v_2$ is such that $v_2^\top v_1 \neq -1$. Next, we define $T := I + v_1v_2^\top$. By the Sherman-Morrison formula, $T$ is invertible if and only if $1+v_2^\top v_1 \neq 0$, equivalently, $v_2^\top v_1 \neq -1$. By our assumption on $v_2$, the matrix $T$ is hence invertible, and 
	\begin{equation*}
	T^{-1} = I - \frac{v_1 v_2^\top}{1+ v_2^\top v_1}.
	\end{equation*}
	We define the matrix 
	\begin{equation}
	\label{definitionQbar}
	\bar{Q} := T^{-1} Q T = \left( I - \frac{v_1 v_2^\top}{1+ v_2^\top v_1} \right) Q (I + v_1 v_2^\top).     
	\end{equation}
	Now, we distinguish two cases: $Q \neq \bar{Q}$ and $Q = \bar{Q}$. First suppose that $Q \neq \bar{Q}$. Since we have $\bar{Q} = T^{-1}Q T$, $TR = R$ and $ST^{-1} = S$, we obtain 
	\begin{align*}
	\mathcal{T}(I \otimes A_0 + Q \otimes B_0 C_0)\mathcal{T}^{-1} &= I \otimes A_0 + \bar{Q} \otimes B_0 C_0 \\
	\mathcal{T}(I \otimes B_0) R &= (I \otimes B_0)R \\
	S(I \otimes C_0)\mathcal{T}^{-1} &= S(I \otimes C_0),
	\end{align*}
	where $\mathcal{T} := T \otimes I$. Here we have used the fact that $p_i = m_i = 1$ for all $i \in \mathcal{V}$, as well as the property $(X_1 \otimes Y_1)(X_2 \otimes Y_2) = (X_1 X_2) \otimes (Y_1 Y_2)$ for matrices $X_1$, $X_2$, $Y_1$, $Y_2$ of compatible dimensions. We conclude that $F_Q(z) = F_{\bar{Q}}(z)$, i.e., the topology of \eqref{system} is not identifiable. 
	
	Secondly, suppose that $Q = \bar{Q}$. It follows from \eqref{definitionQbar} that
	\begin{equation*}
	Qv_1 v_2^\top - \frac{v_1 v_2^\top}{1+ v_2^\top v_1}Q - \frac{v_1 v_2^\top}{1+ v_2^\top v_1} Q v_1 v_2^\top = 0,
	\end{equation*}
	equivalently,
	\begin{equation*}
	(1+ v_2^\top v_1)Qv_1 v_2^\top - v_1 v_2^\top Q - v_1 v_2^\top Q v_1 v_2^\top = 0.
	\end{equation*}
	Multiply from right by $v_2$ and rearrange terms to obtain 
	\begin{equation*}
	(1+ v_2^\top v_1) v_2^\top v_2 Qv_1  = (v_2^\top Q v_2 + v_2^\top Q v_1 v_2^\top v_2) v_1.
	\end{equation*}
	This means that $v_1$ is an eigenvector of $Q$ contained in the kernel of $S$. Therefore, $(S,Q)$ is unobservable (cf. \cite[Ch. 3]{Trentelman2001}). By the previous lemma, this implies that the topology of \eqref{system} is not identifiable. \hfill$\square$

Theorem \ref{theoremfullrank} is interesting because it shows that the ability to measure all node outputs or to excite all node inputs is \emph{necessary} for identifiability in the case of homogeneous SISO networks. This result allows us to sharpen Theorem \ref{necsufconditions} for this particular class of networks.

\begin{theorem}
	\label{theoremSISO}
	Consider a homogeneous SISO network, that is, a system of the form \eqref{system} with $m_i = p_i = 1$ and $A_i = A_0$, $B_i = B_0$ and $C_i = C_0$ for all $i \in \mathcal{V}$. The topology of \eqref{system} is identifiable if and only if $G_0(z) := C_0 (zI - A_0)^{-1}B_0 \neq 0$ and at least one of the following two conditions holds:
	\begin{enumerate}[label=(\roman*)]
		\item $\rank S = N$ and $(Q,R)$ is controllable \label{cond1}
		\item $\rank R = N$ and $(S,Q)$ is observable. \label{cond2}
	\end{enumerate}
\end{theorem}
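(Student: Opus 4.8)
The plan is to prove the two implications separately: necessity follows by assembling the results already established for homogeneous SISO networks, while sufficiency is handled by a direct analysis of case \ref{cond1}, followed by a duality argument that reduces case \ref{cond2} to case \ref{cond1}.

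For necessity, assume the topology of \eqref{system} is identifiable. I would first argue $G_0(z) \neq 0$: in the homogeneous SISO setting each $G_i(z)$ equals the scalar $G_0(z)$, so $G_i^\top(z) \otimes G_j(z) = G_0^2(z)$, and Theorem \ref{SRfullrank} forces $\cker G_0^2(z) = \{0\}$, which fails if $G_0 \equiv 0$. Next, Theorem \ref{theoremfullrank} yields $\rank S = N$ or $\rank R = N$, while Lemma \ref{lemmacont} yields that $(Q,R)$ is controllable and $(S,Q)$ is observable. Combining these: if $\rank S = N$ then \ref{cond1} holds, and if $\rank R = N$ then \ref{cond2} holds, so at least one of the two conditions is satisfied.

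For sufficiency in case \ref{cond1}, note that $\rank S = N$ means $S$ has full column rank, so Theorem \ref{necsufconditions} applies. Because $G_i(z) = G_0(z)$ is scalar, $G_i(z) \otimes H_Q^\top(z) = G_0(z) H_Q^\top(z)$, and since $G_0 \not\equiv 0$ this matrix has the same constant kernel as $H_Q^\top(z)$; hence identifiability is equivalent to $\cker H_Q^\top(z) = \{0\}$. The core step is to evaluate this kernel. Writing $g := G_0(z)$ so that $G(z) = gI$, I would expand $H_Q(z) = (I-gQ)^{-1} g R = \sum_{k=0}^\infty g^{k+1} Q^k R$ as a formal power series, so that $w \in \cker H_Q^\top(z)$ reads $w^\top H_Q(z) = \sum_{k=0}^\infty g^{k+1}\, w^\top Q^k R = 0$. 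The key observation is that $g$ is a nonzero strictly proper, hence non-constant, rational function, so $g$ is transcendental over $\mathbb{R}$ and the powers $g, g^2, g^3, \dots$ are linearly independent over $\mathbb{R}$. Consequently each coefficient must vanish, i.e. $w^\top Q^k R = 0$ for all $k \geq 0$, which by the Cayley--Hamilton theorem is equivalent to $w^\top \begin{pmatrix} R & QR & \cdots & Q^{N-1}R \end{pmatrix} = 0$. Since $(Q,R)$ is controllable the controllability matrix has full row rank, forcing $w = 0$, so $\cker H_Q^\top(z) = \{0\}$ and the topology is identifiable.

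For sufficiency in case \ref{cond2}, I would invoke duality instead of repeating the computation. Transposing \eqref{networktransfer} shows that $F_Q^\top(z)$ is the network transfer matrix of the dual homogeneous SISO network with node dynamics $(A_0^\top, C_0^\top, B_0^\top)$, interconnection matrix $Q^\top$, external input matrix $S^\top$ and external output matrix $R^\top$, whose node transfer function is again $G_0(z)$. Identifiability of $Q$ is equivalent to identifiability of $Q^\top$ for this dual network. Under the correspondence, $\rank R = N$ becomes full column rank of the dual output matrix $R^\top$, and observability of $(S,Q)$ becomes controllability of $(Q^\top, S^\top)$ --- precisely the hypotheses of case \ref{cond1} for the dual network. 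Applying the already-proven case \ref{cond1} then gives identifiability. I expect the main obstacle to be the central kernel computation in case \ref{cond1}, specifically justifying rigorously that the scalar functions $g^k$ are linearly independent over $\mathbb{R}$ so that the series identity decouples into the conditions $w^\top Q^k R = 0$; everything else is either a direct application of the preceding results or careful bookkeeping of the dual system.
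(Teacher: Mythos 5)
Your proof is correct and follows essentially the same route as the paper: necessity by combining Theorem \ref{theoremfullrank} and Lemma \ref{lemmacont} with the observation that $G_0(z)\neq 0$, and sufficiency by reducing to $\cker H_Q^\top(z)=\{0\}$ via Theorem \ref{necsufconditions}, expanding $(I-G_0(z)Q)^{-1}G_0(z)R$ as a formal series, and using linear independence of the powers of $G_0(z)$ together with controllability of $(Q,R)$. The only cosmetic differences are that you justify the linear independence of $G_0^k(z)$ by transcendence of a non-constant rational function over $\mathbb{R}$ where the paper uses a degree-count on numerators, and you make the second sufficiency case explicit through a clean duality reduction to case \ref{cond1} where the paper merely asserts it is ``proven in a similar fashion.''
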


\begin{proof}
	To prove the `if'-statement, we first assume that $G_0(z)$ is nonzero, $\rank S = N$ and $(Q,R)$ is controllable. By Theorem \ref{necsufconditions}, the topology of \eqref{system} is identifiable if and only if $\cker H_Q^\top(z) = \{0\}$, where $H_Q(z)$ is given by $H_Q(z) = (I-G_0(z)Q)^{-1} G_0(z)R$. We expand the latter matrix as a formal series as
	\begin{equation}
	\label{series}
	(I-G_0(z)Q)^{-1} G_0(z)R = \left(\sum_{k=0}^\infty (G_0(z)Q)^k\right) G_0(z)R.
	\end{equation}
	We claim that by strict properness of $G_0(z)$, the powers $G_0^k(z)$ ($k = 0,1,2,\dots$) 
	are linearly independent over the reals. Indeed, suppose $\alpha_1 G_0^{k_1}(z) + \cdots + \alpha_r G_0^{k_r}(z) = 0$ for $\alpha_1,\dots,\alpha_r \in \mathbb{R}$ and $k_1 < \cdots < k_r$. Let $G_0(z) = \frac{p_0(z)}{q_0(z)}$ where $p_0$ and $q_0$ are polynomials. If $\alpha_1 \neq 0$ then 
	\begin{equation}
	\label{polynomials}
	\frac{p_0^{k_1}(z)q_0^{k_r-k_1}(z)}{q_0^{k_r}(z)} = - \frac{1}{\alpha_1} \sum_{i=2}^{r} \alpha_i \frac{p_0^{k_i}(z)q_0^{k_r-k_i}(z)}{q_0^{k_r}(z)}.
	\end{equation}
	By strict properness of $G_0(z)$, this is a contradiction since every numerator on the right hand side of \eqref{polynomials} has degree less than $p_0^{k_1}(z) q_0^{k_r-k_1}(z)$. Thus $\alpha_1 = 0$. In fact, we can repeat the same argument to show $\alpha_1 = \cdots = \alpha_r = 0$, proving the claim of independence. It follows from \eqref{series} that $v \in \mathbb{R}^N$ satisfies $v^\top H_Q(z) = 0$ if and only if 
	$$
	\sum_{k=0}^{\infty} G_0^k(z) v^\top Q^k R = 0, 
	$$
	where we leveraged the hypothesis that $G_0(z)$ is nonzero. Now, using the fact that $G_0^k(z)$ ($k=0,1,2,\dots$) are linearly independent, we obtain $v^\top Q^k R = 0$ for all $k \in \mathbb{N}$. We conclude by controllability of the pair $(Q,R)$ that $v = 0$, hence $\cker H_Q^\top(z) = \{0\}$. In other words, the topology of \eqref{system} is identifiable. The sufficiency of the three conditions $G_0(z) \neq 0$, $\rank R = N$ and $(S,Q)$ is observable is proven in a similar fashion and thus omitted.
	
	To prove the `only if'-statement, suppose that the topology of \eqref{system} is identifiable. Clearly, this implies that $G_0(z) \neq 0$. Indeed, if $G_0(z) = 0$ then $F_Q(z) = 0$ and any $\bar{Q}$ satisfies $F_Q(z) = F_{\bar{Q}}(z)$. By Lemma \ref{lemmacont}, $(Q,R)$ is controllable and $(S,Q)$ is observable. Furthermore, by Theorem \ref{theoremfullrank}, either $S$ or $R$ has full rank.
\end{proof}

It is noteworthy that full rank of either $R$ or $S$ is not necessary for topological identifiability of heterogeneous networks, as demonstrated next.

\begin{example}
	Consider a networked system \eqref{system} consisting of two nodes $A_1 = 0, B_1 = 1$, and $C_1 = 1$, and 
	\begin{align*}	
	A_2 = \begin{pmatrix}
	0 & 1 \\ 0 & 0
	\end{pmatrix}, \: B_2 = \begin{pmatrix}
	0 \\ 1
	\end{pmatrix}, \:
	C_2 = \begin{pmatrix}
	1 & 0
	\end{pmatrix}.
	\end{align*} 
	In addition, assume that $R = \begin{pmatrix}
	1 & 0
	\end{pmatrix}^\top$ and $S = \begin{pmatrix}
	0 & 1
	\end{pmatrix}$. It can be easily verified that
	\begin{equation*}
	F_Q(z) = \frac{Q_{21}}{z^3-Q_{11}z^2-Q_{22}z+Q_{11}Q_{22}-Q_{12}Q_{21}},
	\end{equation*}
	where $Q_{11}$, $Q_{12}$, $Q_{21}$ and $Q_{22}$ are the entries of the interconnection matrix 
	\begin{equation*}
	Q = \begin{pmatrix}
	Q_{11} & Q_{12} \\
	Q_{21} & Q_{22}
	\end{pmatrix}.
	\end{equation*}
	We assume that $Q_{21} \neq 0$ such that $F_Q(z)$ is nonzero. Suppose that $F_Q(z) = F_{\bar{Q}}(z)$ for some interconnection matrix $\bar{Q}$. By comparing the numerators of $F_Q$ and $F_{\bar{Q}}$ we see that $Q_{21} = \bar{Q}_{21}$. Moreover, by comparing the coefficients corresponding to $z^2$ and $z$ in the denominator, we obtain $Q_{11} = \bar{Q}_{11}$ and $Q_{22} = \bar{Q}_{22}$. Finally, by comparing constant terms in the denominator, we see that $Q_{12} = \bar{Q}_{12}$. Hence, $Q = \bar{Q}$ and we conclude that the topology of \eqref{system} is identifiable. However, $S$ does not have full column rank and $R$ does not have full row rank.  
\end{example}

\section{Topology identification approach}
\label{sectionresultsidentification}
In this section, we focus on the problem of topology identification, as formulated in Problem~\ref{problem2}. The proposed solution consists of two steps: first identify the Markov parameters of the networked system \eqref{system}, and then extract the matrix $Q$. There are several ways of computing the Markov parameters on the basis of input/output data, we will summarize some of them in the next section.

\subsection{Identification of Markov parameters}

Consider a general linear system of the form 
\begin{align}
x(t+1) &= Ax(t) + Bu(t) \label{syseq1}\\
y(t) &= Cx(t) + Du(t), \label{syseq2}
\end{align}
where $x \in \mathbb{R}^n$ is the state, $u \in \mathbb{R}^m$ is the input and $y \in \mathbb{R}^p$ the output. In this section we recap how one can identify the Markov parameters $D, CB,CAB,\dots,CA^r B$ for $r \in \mathbb{N}$, using measurements of the input and output of \eqref{syseq1}-\eqref{syseq2}. For a given signal $f(t)$ with $t = 0,\dots,T-1$, we define the Hankel matrix of depth $k$ as
\begin{equation*}
\mathcal{H}_k(f) := \begin{pmatrix}
f(0) & f(1) & \cdots & f(T-k) \\
f(1) & f(2) & \cdots & f(T-k+1) \\
\vdots & \vdots &  & \vdots \\
f(k-1) & f(k) & \cdots & f(T-1)
\end{pmatrix}.
\end{equation*}
The signal $f(0),f(1),\dots,f(T-1)$ is said to be \emph{persistently exciting} of order $k$ if $\mathcal{H}_k(f)$ has full row rank. Now suppose that we measure $T$ samples of the input $u(t)$ and output $y(t)$ of \eqref{syseq1}-\eqref{syseq2} for $t = 0,1,\dots,T-1$. We rearrange these measurements in Hankel matrices of depth $n+r+1$. Moreover, we partition  
\begin{align*}
\mathcal{H}_{n+r+1}(u) = \begin{pmatrix}
U_p \\ U_f
\end{pmatrix}, \quad \mathcal{H}_{n+r+1}(y) = \begin{pmatrix}
Y_p \\ Y_f
\end{pmatrix}, 
\end{align*}
where $U_p$ and $Y_p$ contain the first $n$ row blocks of $\mathcal{H}_{n+r+1}(u)$ and $\mathcal{H}_{n+r+1}(y)$, respectively. The following result from \cite[Prop. 4]{Markovsky2008} shows how the Markov parameters can be obtained from data.
\begin{theorem}
	\label{theoremMarkovcomp}
	Let \eqref{syseq1} be controllable and assume that $u(0),\dots,u(T-1)$ is persistently exciting of order $2n+r+1$. There exists a matrix $G\in \mathbb{R}^{(T-n-r) \times m}$ such that
	$$
	\begin{pmatrix}
	U_p \\ Y_p \\ U_f
	\end{pmatrix}G = \begin{pmatrix}
	0 \\ 0 \\ \col(I,0)
	\end{pmatrix}.
	$$
	Moreover, the Markov parameters can be obtained as $Y_f G = \col(D,CB,CAB,\dots,CA^rB)$.
\end{theorem}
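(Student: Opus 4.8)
The plan is to recognize Theorem \ref{theoremMarkovcomp} as an instance of the behavioral characterization of trajectories that underlies the data-driven identification literature (Willems' fundamental lemma), here specialized to reading off an impulse response. The starting observation is that, because \eqref{syseq1} is controllable and $u(0),\dots,u(T-1)$ is persistently exciting of order $2n+r+1=(n+r+1)+n$, the column space of the stacked Hankel matrix $\col(\mathcal{H}_{n+r+1}(u),\mathcal{H}_{n+r+1}(y))$ equals the set of all length-$(n+r+1)$ input/output trajectories of \eqref{syseq1}--\eqref{syseq2}. I would invoke this as the main tool; the first step is to verify that the hypotheses match its requirements, namely controllability of $(A,B)$ together with the precise excitation order relative to the trajectory length $n+r+1$. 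Note that the resulting coefficient vectors live in $\mathbb{R}^{T-n-r}$, which is exactly the column count of $\mathcal{H}_{n+r+1}$, so that $G\in\mathbb{R}^{(T-n-r)\times m}$ as claimed.

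Next I would exhibit the specific trajectory to be reproduced. For each $j=1,\dots,m$, consider the trajectory generated from the zero initial state $x(0)=0$ by the input that is zero on the first $n$ samples, equal to $e_j$ on sample $n$, and zero afterwards. Since the state stays at zero while the input is zero, the first $n$ outputs vanish, and from sample $n$ onward the output is exactly the impulse response, with successive blocks $De_j, CBe_j, CABe_j,\dots$. Stacking the $m$ coefficient vectors provided by the fundamental lemma into a single matrix $G$, the input part of the Hankel identity reads $U_pG=0$ and $U_fG=\col(I,0)$, while the output part reads $Y_pG=0$ and $Y_fG=\col(D,CB,CAB,\dots,CA^rB)$. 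This simultaneously yields the claimed linear equation for $G$ and the extraction formula.

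Finally I would address well-definedness, since the statement asserts that \emph{any} $G$ solving the linear equation returns the Markov parameters. Suppose $h$ lies in the kernel of $\col(U_p,Y_p,U_f)$; then $\mathcal{H}_{n+r+1}(u)h=0$, so $h$ selects a genuine zero-input trajectory, while $Y_ph=0$ forces its first $n$ outputs to vanish. Writing $x_0$ for the effective initial state of this trajectory, $Y_ph=0$ gives $CA^kx_0=0$ for $k=0,\dots,n-1$, and Cayley--Hamilton extends this to all $k$, whence $Y_fh=0$. Thus $Y_fG$ is invariant over the whole solution set and equals the value computed above. I expect the main obstacle to be the first step: correctly stating and applying the trajectory-span characterization --- in particular matching the required persistency-of-excitation order to the window length $n+r+1$ and relying on controllability rather than any observability of $(A,C)$ --- after which the impulse-response construction and the Cayley--Hamilton uniqueness argument are routine.
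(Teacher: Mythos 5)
The paper does not actually prove this statement: it is imported verbatim as \cite[Prop.~4]{Markovsky2008}, so there is no in-paper argument to compare against. Your proposal is a correct, self-contained derivation, and it is essentially the argument that underlies the cited result: invoke the fundamental lemma (controllability of $(A,B)$ plus persistency of excitation of order $L+n$ with window length $L=n+r+1$, hence order $2n+r+1$) to conclude that the columns of $\col(\mathcal{H}_{n+r+1}(u),\mathcal{H}_{n+r+1}(y))$ span all length-$(n+r+1)$ trajectories; realize the impulse-response experiment (zero state, $n$ leading zero inputs, then $e_j$, then zeros) as such a trajectory to produce $G$; and then handle non-uniqueness of $G$ by showing that any $h$ with $U_ph=0$, $Y_ph=0$, $U_fh=0$ corresponds to a zero-input trajectory whose first $n$ outputs vanish, so that $CA^kx_0=0$ for $k=0,\dots,n-1$ and Cayley--Hamilton forces $Y_fh=0$. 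All three steps are sound, and you are right that observability of $(A,C)$ is never needed. One small remark your construction actually exposes: with $U_f$ and $Y_f$ consisting of $r+1$ row blocks, the impulse experiment delivers the blocks $D,CB,\dots,CA^{r-1}B$ (that is, $r+1$ Markov parameters), whereas the statement lists $D,CB,\dots,CA^{r}B$, which is $r+2$ blocks; this is an off-by-one in the theorem as transcribed in the paper (the depth would need to be $n+r+2$ to reach $CA^rB$), not a flaw in your reasoning, but it is worth flagging rather than copying.
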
 

Theorem \ref{theoremMarkovcomp} shows how the Markov parameters of the system can be obtained from measured input/output data. The input should be designed in such a way that it is persistently exciting, special cases of such inputs have been discussed in \cite{Verhaegen1992}.
For $u(0),\dots,u(T-1)$ to be persistently exciting of order $2n+r+1$ a number of samples $T \geq (m+1)(2n+r+1)-1$ is necessary. In fact, there are input functions that achieve persistency of excitation of this order exactly for $T = (m+1)(2n+r+1)-1$. A refinement of Theorem \ref{theoremMarkovcomp} is possible using the notion of weaving trajectories \cite{Markovsky2005}, which reduces 
the order of excitation to $2n+1$. More generally, one can extend the notion of persistency of excitation to an arbitrary concatenation of multiple trajectories \cite{vanWaarde2020LCSS}. This is useful in
situations where single experiments are individually not sufficiently informative.

\begin{remark}
In addition to the deterministic setting of Theorem \ref{theoremMarkovcomp}, there are approaches to identify the Markov parameters of systems with disturbances, i.e., systems of the form
\begin{align*}
x(t+1) &= Ax(t) + Bu(t) + w(t) \\
y(t) &= Cx(t) + Du(t) + v(t),
\end{align*}
where $v$ and $w$ are zero mean, white vector sequences. In particular, the paper \cite{Oymak2018} studies the identification of the system's Markov parameters from finite data, and provides statistical guarantees for the quality of estimation. 
\end{remark}

\subsection{Topology identification}

Subsequently, we will turn to the problem of identifying the topology of \eqref{system} from the network's Markov parameters. As in Theorem \ref{necsufconditions}, we will assume that $S$ has full column rank. In fact, to lighten the notation, we will simply assume $S = I$, even though all results can be stated for general matrices $S$ having full column rank. Under the latter assumption, the Markov parameters of \eqref{system} are given by
$$
M_\ell(Q) := C(A+BQC)^\ell BR.
$$
Whenever the dependence of $M_\ell(Q)$ on $Q$ is clear, we simply write $M_\ell$. It is not immediately clear how to obtain $Q$ from the Markov parameters since $M_\ell$ depends on the $\ell$-th power of $A+BQC$. The following lemma will be helpful since it implies that $M_\ell$ can essentially be viewed as an affine function in $Q$ and lower order Markov parameters.

\begin{lemma}
	\label{lemmaMarkov}
	We have that
	$$
	M_\ell = CA^\ell BR + \sum_{i = 0}^{\ell-1} CA^iBQ M_{\ell-i-1}.
	$$ 	
\end{lemma}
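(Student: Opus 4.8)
The plan is to work directly with the closed form $M_\ell = C(A+BQC)^\ell BR$ and to peel off one factor of $A+BQC$ using a telescoping identity for a difference of powers. Writing $P := A + BQC$, so that $M_\ell = C P^\ell BR$ and $P - A = BQC$, the key step is to establish the matrix identity
$$
P^\ell - A^\ell = \sum_{i=0}^{\ell-1} A^i (P-A) P^{\ell-1-i}.
$$
I would verify this by recognizing the right-hand side as a telescoping sum: setting $T_i := A^i P^{\ell-i}$, each summand equals $A^i P^{\ell-i} - A^{i+1} P^{\ell-1-i} = T_i - T_{i+1}$, so the sum collapses to $T_0 - T_\ell = P^\ell - A^\ell$.

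Second, I would substitute $P - A = BQC$ into this identity to obtain $P^\ell = A^\ell + \sum_{i=0}^{\ell-1} A^i BQC\, P^{\ell-1-i}$. Premultiplying by $C$ and postmultiplying by $BR$ then gives
$$
M_\ell = CA^\ell BR + \sum_{i=0}^{\ell-1} CA^i BQ\, C P^{\ell-1-i} BR.
$$

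Third, I would recognize the factor $C P^{\ell-1-i} BR$ inside the sum as exactly $M_{\ell-1-i}$ by the definition of the Markov parameters, and note that $\ell - 1 - i = \ell - i - 1$. This yields precisely the claimed recursion and completes the proof.

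The whole argument is short, and the only real content is spotting the telescoping identity for $P^\ell - A^\ell$; once that is in place, everything else is substitution and index bookkeeping, so I do not anticipate any genuine obstacle. As an alternative route, one could instead prove the recursion by induction on $\ell$, starting from $P^\ell = A P^{\ell-1} + BQC\, P^{\ell-1}$ and invoking the induction hypothesis, but this requires rearranging a nested sum and is less direct than the telescoping approach.
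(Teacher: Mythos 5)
Your proof is correct and follows essentially the same route as the paper: both rest on the matrix identity $(A+BQC)^\ell = A^\ell + \sum_{i=0}^{\ell-1} A^i\, BQC\, (A+BQC)^{\ell-i-1}$, followed by pre- and post-multiplication by $C$ and $BR$. The only difference is that the paper verifies this identity by induction on $\ell$, whereas you verify it via the telescoping sum $\sum_i (T_i - T_{i+1})$ with $T_i = A^i(A+BQC)^{\ell-i}$ --- an equally valid and arguably more direct check.
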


\begin{proof}
	First, we claim that for square matrices $D_1$ and $D_2$ of the same dimensions, we have 
	\begin{equation}
	\label{KLpowers}
	(D_1 + D_2)^\ell = D_1^\ell + \sum_{i=0}^{\ell-1} D_1^i D_2 (D_1+D_2)^{\ell-i-1}
	\end{equation}
	for all $\ell = 1,2,\dots$. It is straightforward to prove this claim by induction. Indeed, for $\ell = 1$, \eqref{KLpowers} holds. If \eqref{KLpowers} holds for $\ell \geq 1$ then 
	\begin{align*}	
	(D_1 + D_2)^{\ell+1} &= D_1^\ell(D_1 + D_2) + \sum_{i=0}^{\ell-1} D_1^i D_2 (D_1 + D_2)^{\ell-i} \\
	&= D_1^{\ell+1} + \sum_{i=0}^{\ell} D_1^i D_2 (D_1 + D_2)^{\ell-i},
	\end{align*}
	proving the claim. Subsequently, by substitution of $D_1 = A$ and $D_2 = BQC$ into \eqref{KLpowers}, we obtain
	\begin{equation*}
	(A + BQC)^\ell = A^\ell + \sum_{i=0}^{\ell-1} A^i BQC (A+BQC)^{\ell-i-1}.
	\end{equation*}
	Finally, the lemma follows by pre- and postmultiplication by $C$ and $BR$, respectively. 
\end{proof}

Using Lemma \ref{lemmaMarkov}, we can come up with a system of linear equations in the unknown interconnection matrix $Q$. To see this, let us denote $K_\ell := M_\ell - CA^\ell BR$. Moreover, define the Toeplitz matrix $L$ by 
$$
L := \begin{pmatrix}  
C B & 0 & \cdots & 0 \\
C A B & CB & \cdots & 0 \\
\vdots & \vdots & \ddots & \vdots \\
C A^{r-1} B & C A^{r-2}B & \cdots & C B
\end{pmatrix},
$$
where $r \geq 2n-1$. We apply Lemma \ref{lemmaMarkov} for $\ell = 1,\dots,r$ to obtain
\begin{equation}
\label{Sylvesterfull}
\begin{pmatrix}
K_1 \\ K_2 \\ \vdots \\ K_r
\end{pmatrix} =
L (I \otimes Q)
\begin{pmatrix}
M_0 \\ M_1 \\ \vdots \\  M_{r-1}
\end{pmatrix}.
\end{equation}
Next, let $L_i$ denote the $(i+1)$-th column block of $L$ and define the matrix $K := \col(K_1,K_2,\dots,K_r)$. We can then write \eqref{Sylvesterfull} in a more compact form as
\begin{equation}
K = \sum_{i=0}^{r-1} L_i Q M_i,
\end{equation}
which reveals that $Q$ is a solution to a \emph{generalized Sylvester equation}. Topology identification thus boils down to i) identifying the network's Markov parameters, ii) constructing the matrices $K$, $L_i$ and $M_i$ for $i = 0,\dots,r-1$ and iii) solving the Sylvester equation. We summarize this procedure in the following theorem. 

\begin{theorem}
	\label{theoremSylvester}
	Consider the networked system \eqref{system} with $S = I$. Let the Markov parameters of \eqref{system} be $M_i$ for $i = 0,1,\dots,r \geq 2n-1$. Let the matrices $K$ and $L_i$ be as before. If the topology of \eqref{system} is identifiable then the interconnection matrix $Q$ is the unique solution to the generalized Sylvester equation
		\begin{equation}
		\label{Sylvester}
		K = \sum_{i=0}^{r-1} L_i \mathbf{Q} M_i
		\end{equation}
		in the unknown $\mathbf{Q}$.
\end{theorem}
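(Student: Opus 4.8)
The plan is to split the claim into \emph{existence} and \emph{uniqueness}. Existence is already in hand: the derivation of \eqref{Sylvesterfull} applies Lemma \ref{lemmaMarkov} for $\ell = 1,\dots,r$ and shows that the true interconnection matrix $Q$ satisfies $K = \sum_{i=0}^{r-1} L_i Q M_i$, so $\mathbf{Q} = Q$ solves \eqref{Sylvester}. The entire content of the theorem therefore reduces to showing that this solution is the only one.

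For uniqueness I would take an arbitrary real solution $\bar{Q}$ of \eqref{Sylvester} and prove $\bar{Q} = Q$. Reading off the $\ell$-th block row of \eqref{Sylvester} and using that the block of $L$ in row $\ell$, column block $i$ equals $CA^{\ell-i-1}B$ when $i \le \ell-1$ and vanishes otherwise, the $\ell$-th block equation reduces, after reindexing, to $\sum_{i=0}^{\ell-1} CA^i B \bar{Q} M_{\ell-i-1} = K_\ell$ for $\ell = 1,\dots,r$. Let $\bar{M}_\ell := C(A+B\bar{Q}C)^\ell BR$ denote the Markov parameters produced by $\bar{Q}$; by Lemma \ref{lemmaMarkov} they obey $\bar{M}_\ell = CA^\ell BR + \sum_{i=0}^{\ell-1} CA^i B\bar{Q}\,\bar{M}_{\ell-i-1}$. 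I would then show by induction on $\ell$ that $\bar{M}_\ell = M_\ell$ for $\ell = 0,\dots,r$: the base case $\bar{M}_0 = CBR = M_0$ holds trivially, and in the inductive step the hypothesis lets me replace each $\bar{M}_{\ell-i-1}$ by $M_{\ell-i-1}$ in the recursion, so that the $\ell$-th block equation for $\bar{Q}$ gives exactly $\bar{M}_\ell = CA^\ell BR + K_\ell = M_\ell$.

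The decisive step is to convert the agreement $\bar{M}_\ell = M_\ell$ for $\ell = 0,\dots,r$ into equality of transfer functions. With $S = I$, both $F_Q(z)$ and $F_{\bar{Q}}(z)$ are strictly proper and admit realizations of order $n$, so their difference has McMillan degree at most $2n$ and Markov parameters $M_\ell - \bar{M}_\ell$. These vanish for $\ell = 0,\dots,r$, and since $r \ge 2n-1$ at least the first $2n$ of them are zero; as the Markov parameters of a system of order at most $2n$ satisfy a linear recurrence of order at most $2n$ (the principle behind the remark following Theorem \ref{SRfullrank}), all remaining ones vanish as well, whence $F_Q(z) = F_{\bar{Q}}(z)$. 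Because the topology of \eqref{system} is identifiable and $\bar{Q}$ is real, Proposition \ref{theoremtransfer} forces $\bar{Q} = Q$, establishing uniqueness.

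I expect the realization-theoretic step to be the crux and the main obstacle: it is the only place where the bound $r \ge 2n-1$ is actually exploited, and it requires arguing beyond the algebraic identities, namely that $2n$ matched Markov parameters pin down transfer functions of order at most $n$. The inductive identity for the Markov parameters, although it needs the careful block bookkeeping of $L$ and the reindexing above, is otherwise routine.
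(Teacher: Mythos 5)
Your proposal is correct and follows essentially the same route as the paper's own proof: existence by construction, then an induction (the paper phrases it as a smallest-counterexample argument) showing the Markov parameters generated by any solution $\bar{Q}$ agree with $M_0,\dots,M_r$, followed by the Cayley--Hamilton/realization argument to conclude $F_Q(z)=F_{\bar{Q}}(z)$ and invoke identifiability. Your only addition is to spell out the realization-theoretic justification (order-$2n$ difference system, linear recurrence of order $2n$) that the paper compresses into a single appeal to Cayley--Hamilton.
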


\begin{proof}	
	Note that the interconnection matrix $Q$ is a solution to \eqref{Sylvester} by construction. Suppose that $\bar{Q}$ is also a solution to \eqref{Sylvester}. We want to prove that $Q = \bar{Q}$. Since $Q$ and $\bar{Q}$ are both solutions to \eqref{Sylvester}, we have
	\begin{equation}
	\label{sumQsumQbar}
	\sum_{i = 0}^{\ell-1} CA^i B Q M_{\ell-i-1}(Q) = \sum_{i = 0}^{\ell-1} CA^i B \bar{Q} M_{\ell-i-1}(Q)
	\end{equation}
	for $\ell = 1,2,\dots,r$. Here we have written the dependence of $M_{\ell-i-1}$ on $Q$ explicitly, to distinguish between $Q$ and $\bar{Q}$. By Lemma \ref{lemmaMarkov} we have
	\begin{align}
	M_\ell(Q) &= CA^\ell BR + \sum_{i = 0}^{\ell-1} CA^iB Q M_{\ell-i-1}(Q) \label{MQ} \\
	M_\ell(\bar{Q}) &= CA^\ell BR + \sum_{i = 0}^{\ell-1} CA^iB \bar{Q} M_{\ell-i-1}(\bar{Q}). \label{MbarQ} 
	\end{align}
	Clearly, $M_0(Q) = CBR = M_0(\bar{Q})$. In fact, we claim that $M_k(Q) = M_k(\bar{Q})$ for all $k = 0,1,\dots,r$. Suppose on the contrary that there exists an integer $s$ such that $0 < s \leq r$ and $M_s(Q) \neq M_s(\bar{Q})$. We assume without loss of generality that $s$ is the smallest integer for which this is the case. Then $M_k(Q) = M_k(\bar{Q})$ for all $k = 0,1,\dots,s-1$. By combining \eqref{sumQsumQbar} and \eqref{MQ} we obtain 
	\begin{equation}
	M_s(Q) = CA^s BR + \sum_{i = 0}^{s-1} C A^i B \bar{Q} M_{s-i-1}(Q).
	\end{equation}
	By hypothesis $M_k(Q) = M_k(\bar{Q})$ for all $k = 0,1,\dots,s-1$, which yields
	\begin{equation*}
	M_s(Q) = CA^s BR + \sum_{i = 0}^{s-1} C A^i B \bar{Q} M_{s-i-1}(\bar{Q}) = M_s(\bar{Q}),
	\end{equation*}
	using \eqref{MbarQ}. This is a contradiction and we conclude that $M_k(Q) = M_k(\bar{Q})$ for all $k = 0,1,\dots,r$. Since $r \geq 2n-1$ it follows from the Cayley-Hamilton theorem that $M_k(Q) = M_k(\bar{Q})$ for all $k \in \mathbb{N}$. Thus, $F_Q(z) = F_{\bar{Q}}(z)$. Finally, as the topology of \eqref{system} is identifiable, we conclude that $Q = \bar{Q}$. This completes the proof. 
\end{proof}

\subsection{Solving the generalized Sylvester equation}

In the previous section, we saw that the generalized Sylvester equation \eqref{Sylvester} plays a central role in our topology identification approach. In this section, we discuss methods to solve this equation. One simple approach to the problem is to vectorize $\mathbf{Q}$ and write \eqref{Sylvester} as the system of linear equations 
\begin{equation}
\label{Sylvestersystem}
\sum_{i=0}^{r-1} \left( M_i^\top \otimes L_i \right) \vect(\mathbf{Q}) = \vect(K)
\end{equation}
in the unknown $\vect(\mathbf{Q})$ of dimension
$$
\left(\sum_{i=1}^N m_i\right)\left(\sum_{i=1}^N p_i\right).
$$
However, a drawback of this approach is that the dimension of $\vect(\mathbf{Q})$ is quadratic in the number of nodes $N$. This means that for large networks, solving \eqref{Sylvestersystem} is costly from a computational point of view.  

For the `ordinary' Sylvester equation of the form 
$$L_0 \mathbf{Q} + \mathbf{Q} M_1 = K,$$
there are well-known solution methods that avoid vectorization\footnote{It is typically assumed that the matrices $L_0$ and $M_1$ are square \cite{Bartels1972,Golub1979}.}. The general idea is to transform the matrices $L_0$ and $M_1$ to a suitable form so that the Sylvester equation is easier to solve. A classic approach is the \emph{Bartels-Stewart} method \cite{Bartels1972} that transforms $L_0$ and $M_1$ to real Schur form by means of two orthogonal similarity transformations. The resulting equivalent Sylvester equation is then simply solved by backward substitution. A Hessenberg-Schur variant of this algorithm was proposed in \cite{Golub1979}. The approach was also extended to be able to deal with the more general equation
$$ 
L_0 \mathbf{Q} M_0 + L_1 \mathbf{Q} M_1 = K,
$$
using QZ-decompositions \cite[Sec. 7]{Golub1979}. The problem with all of these transformation methods is that they rely on the fact that the Sylvester equation consists of exactly two $\mathbf{Q}$-dependent terms, i.e., $r = 1$. Therefore, it does not seem possible to extend such methods to solve generalized Sylvester equations of the form \eqref{Sylvester} for $r > 1$, see also the discussion in \cite[Sec. 2]{vanLoan2000}. 

Nonetheless, we can improve upon the basic approach of vectorization \eqref{Sylvestersystem} by noting that the matrices $A$, $B$ and $C$ have a special structure. Indeed, recall from \eqref{ABC} that these matrices are block diagonal. This allows us to write down a Sylvester equation for each row block of $\mathbf{Q}$. Let $\mathbf{Q}^{(j)}$ denote the $j$-th block row of $\mathbf{Q}$ for $j \in \mathcal{V}$. Then it is straightforward to show that \eqref{Sylvester} is equivalent to 
\begin{equation}
\label{nodeSylvester}
K^{(j)} = \sum_{i=0}^{r-1} L_{i}^{(j)} \mathbf{Q}^{(j)} M_i
\end{equation}
for all $j \in \mathcal{V}$, where $L_i^{(j)}$ is the $(i+1)$-th column block of the matrix $L^{(j)}$, given by
$$
L^{(j)} := \begin{pmatrix}  
C_j B_j & 0 & \cdots & 0 \\
C_j A_j B_j & C_jB_j & \cdots & 0 \\
\vdots & \vdots & \ddots & \vdots \\
C_j A_j^{r-1} B_j & C_j A_j^{r-2} B_j & \cdots & C_j B_j
\end{pmatrix},
$$
and $K^{(j)} := \col( K_1^{(j)}, K_2^{(j)},\dots,K_r^{(j)} )$ with $K_\ell^{(j)}$ the $j$-th row block of $K_\ell$. The importance of \eqref{nodeSylvester} lies in the fact that each row block of $Q$ can be obtained independently, which significantly reduces the dimensions of the involved matrices. In fact, \eqref{nodeSylvester} is equivalent to the linear system of equations
\begin{equation}
\label{Sylvesternodesystem}
\sum_{i=0}^{r-1} \left( M_i^\top \otimes L_i^{(j)} \right) \vect\left(\mathbf{Q}^{(j)}\right) = \vect\left(K^{(j)}\right)
\end{equation}
in the unknown $\vect\left(\mathbf{Q}^{(j)}\right)$ of dimension $m_j \left(\sum_{i=1}^N p_i\right)$. Note that the unknown is linear in the number of nodes, assuming that $m_j$ and $p_i$ are small in comparison to $N$.

\subsection{Robustness analysis}
In the case that the Markov parameters $M_0,M_1,\dots,M_{r}$ are identified exactly, we can reconstruct the topology by solving the generalized Sylvester equation \eqref{Sylvester}, or equivalently, the system of linear equations \eqref{Sylvestersystem}. 
Now suppose that our estimates of the Markov parameters are inexact, and we have access to 
\begin{equation}
\label{noisyMarkov}
\hat M_\ell := M_\ell + \Delta_\ell, \quad \ell=1,2,\ldots,r
\end{equation}
where the real matrices $\Delta_\ell$
represent the
perturbations. Accordingly, we define $\hat K_\ell := \hat M_\ell - CA^\ell BR = K_\ell + \Delta_\ell$.
Let $\Delta := \col(\Delta_1,\Delta_2,\dots,\Delta_{r})$. In this case it is natural to look for an approximate (least squares) solution $\vect(\hat Q)$ that solves
\begin{equation}
\label{min}
\min_{\vect(\mathbf{\hat Q})} \norm{\sum_{i=0}^{r-1} \left( \hat M_i^\top 
\otimes L_i\right) \vect(\mathbf{\hat Q}) - \vect(\hat K)}.
\end{equation} 
An obvious question is how the solution $\hat{Q}$ is related to the true interconnection matrix $Q$. The following lemma provides a bound on the infinity norm of $\vect(\hat{Q})-\vect(Q)$. In what follows, we will make use of the constant
\begin{align*}
\alpha := \norm{ \left( \sum_{i=0}^{r-1} 
\left( \hat M_i^\top \otimes L_i\right) \right)^\dagger }_{\infty},
\end{align*}
where $X^\dagger$ denotes the Moore-Penrose inverse of $X$.

\begin{lemma}
	Consider the network \eqref{system} with $S=I$ and suppose that its topology be identifiable. Assume that the solution $\hat{Q}$ to \eqref{min} is unique. Then we have that
	$$
	\norm{\vect(\hat{Q})-\vect(Q)}_{\infty}
	$$
    is upper bounded by
	\begin{equation}
	\label{upperbound}
	 \alpha \left(\norm{\vect(\Delta)}_{\infty} + \norm{\sum_{i=0}^{r-1} (\Delta_i^\top \otimes L_i)}_{\infty} \norm{\vect(Q)}_{\infty}\right).
	\end{equation}
\end{lemma}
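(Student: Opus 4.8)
The plan is to recast the perturbed least-squares problem \eqref{min} as a standard perturbed linear system and to express the reconstruction error $\vect(\hat Q)-\vect(Q)$ through the Moore--Penrose inverse of the perturbed coefficient matrix. Write $\Phi := \sum_{i=0}^{r-1}(M_i^\top \otimes L_i)$ and $\hat\Phi := \sum_{i=0}^{r-1}(\hat M_i^\top \otimes L_i)$ for the nominal and perturbed coefficient matrices of \eqref{Sylvestersystem}, and set $E := \hat\Phi - \Phi$. Using $\hat M_i^\top - M_i^\top = \Delta_i^\top$ together with the bilinearity of the Kronecker product, one gets $E = \sum_{i=0}^{r-1}(\Delta_i^\top \otimes L_i)$, which is exactly the matrix appearing in \eqref{upperbound}. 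On the right-hand side, $\vect(\hat K) = \vect(K) + \vect(\Delta)$ since $\hat K_\ell = K_\ell + \Delta_\ell$. Finally, because $Q$ solves the nominal generalized Sylvester equation \eqref{Sylvester} by construction (Lemma \ref{lemmaMarkov}), it satisfies the exact relation $\Phi\,\vect(Q) = \vect(K)$.

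The central step is to exploit the uniqueness hypothesis. Uniqueness of the minimizer of $\norm{\hat\Phi\, x - \vect(\hat K)}$ is equivalent to $\hat\Phi$ having trivial kernel, i.e. full column rank; in that case the unique minimizer is $\vect(\hat Q) = \hat\Phi^\dagger \vect(\hat K)$ and, crucially, $\hat\Phi^\dagger \hat\Phi = I$, so that $\hat\Phi^\dagger$ acts as a genuine left inverse. I would then insert this identity in front of $\vect(Q)$, writing $\vect(Q) = \hat\Phi^\dagger \hat\Phi\,\vect(Q)$, and subtract to obtain
$$\vect(\hat Q) - \vect(Q) = \hat\Phi^\dagger\bigl(\vect(\hat K) - \hat\Phi\,\vect(Q)\bigr).$$
Substituting $\vect(\hat K) = \vect(K) + \vect(\Delta)$, $\hat\Phi = \Phi + E$, and the exact relation $\Phi\,\vect(Q) = \vect(K)$ collapses the parenthesis to $\vect(\Delta) - E\,\vect(Q)$, yielding the error identity
$$\vect(\hat Q) - \vect(Q) = \hat\Phi^\dagger\bigl(\vect(\Delta) - E\,\vect(Q)\bigr).$$

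From here the bound follows by routine norm manipulations: taking the infinity norm, using submultiplicativity $\norm{\hat\Phi^\dagger(\cdot)}_{\infty} \le \norm{\hat\Phi^\dagger}_{\infty}\norm{\cdot}_{\infty} = \alpha\,\norm{\cdot}_{\infty}$, and then applying the triangle inequality together with $\norm{E\,\vect(Q)}_{\infty} \le \norm{E}_{\infty}\norm{\vect(Q)}_{\infty}$ reproduces \eqref{upperbound} verbatim, once $\norm{E}_{\infty}$ is recognized as $\norm{\sum_{i=0}^{r-1}(\Delta_i^\top \otimes L_i)}_{\infty}$. I expect the only genuinely delicate point to be the second step: carefully arguing that uniqueness of $\hat Q$ forces full column rank of $\hat\Phi$ and hence the left-inverse identity $\hat\Phi^\dagger\hat\Phi = I$, since the whole derivation of the error identity hinges on being able to replace $\vect(Q)$ by $\hat\Phi^\dagger\hat\Phi\,\vect(Q)$. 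The remaining algebra and the norm estimates are entirely standard.
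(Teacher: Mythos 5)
Your proposal is correct and follows essentially the same route as the paper: both identify $\vect(\hat Q)=\hat\Phi^\dagger\vect(\hat K)$ with $\hat\Phi$ of full column rank (the paper writes this via the normal equations $A_E^\top A_E\vect(Q)=A_E^\top(\vect(K)+E\vect(Q))$, which is just your left-inverse identity $\vect(Q)=\hat\Phi^\dagger\hat\Phi\vect(Q)$ in disguise), arrive at the same error identity $\vect(\hat Q)-\vect(Q)=\hat\Phi^\dagger(\vect(\Delta)-E\vect(Q))$, and conclude by the triangle inequality and submultiplicativity of the infinity norm.
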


Note that the bound \eqref{upperbound} tends to zero as $\Delta_0,\Delta_1,\dots,\Delta_r$ tend to zero, so $\hat{Q}$ is a good approximation of $Q$ for small perturbations. An overestimate of \eqref{upperbound} can be obtained if some prior knowledge is available. In particular, note that $\alpha$ is readily computable from the estimated Markov parameters \eqref{noisyMarkov}. The first two norms in \eqref{upperbound} can be upper bounded if a bound on $\norm{\Delta_i}_\infty$ is given. Identification error bounds on the Markov parameters are derived, e.g., in \cite{Oymak2018}. 
Finally, to estimate $\norm{\vect(Q)}_{\infty}$ one requires a bound 
on the largest network weight, i.e., an upper bound on the largest (in magnitude) entry of $Q$. 
The upper bound \eqref{upperbound} is useful in the case that the nonzero weights of the network are lower bounded in magnitude by some known positive scalar $\gamma$, an assumption that is common in the literature on consensus networks, cf. \cite[Sec. 3]{LeBlanc2013}. Indeed, in this case we can can exactly identify the graph structure $\mathcal{G}$ from noisy Markov parameters if 
$$
\alpha \left( \norm{\vect(\Delta)}_{\infty} + \norm{\sum_{i=0}^{r-1} (\Delta_i^\top \otimes L_i)}_{\infty} \norm{\vect(Q)}_{\infty}\right) < \frac{1}{2}\gamma,
$$
since identified entries smaller than $\frac{1}{2}\gamma$ are necessarily zero. We will further illustrate this point in Example \ref{examplenoise}.

\begin{proof}
	We make use of the shorthand notation
	$$
	E := \sum_{i=0}^{r-1} \left(\Delta_i^\top \otimes L_i\right), \: 
	A_E := \sum_{i=0}^{r-1} \left( \hat M_i^\top \otimes L_i\right).
	$$
	The hypothesis that $\hat{Q}$ is unique is equivalent to $A_E$ having full column rank. 
	By using \eqref{Sylvestersystem} and the relation
	$\hat M_i = M_i + \Delta_i$, we get
	$$
	A_E^\top A_E \vect(Q) = A_E^\top (\vect(K)+E\vect(Q)).
	$$
	Therefore, $\vect(Q) = A_E^\dagger (\vect(K)+E\vect(Q))$. Further, 
	$\vect(\hat{Q}) = A_E^\dagger \vect(\hat K) = A_E^\dagger \vect(K+\Delta)$.
	This yields
	\begin{equation*}
	\vect(\hat{Q}) - \vect(Q) = A_E^\dagger(\vect(\Delta)-E\vect(Q)).
	\end{equation*}
	Finally, taking infinity norms yields the upper bound \eqref{upperbound}. This completes the proof.
\end{proof}

\begin{example}
	\label{examplenoise}
	Consider the networked system in Example \ref{example1}. We consider the situation in which only the first node of the network is externally excited. We already know by the discussion in Example \ref{example1} that the topology of the system is identifiable. Here, our aim is to reconstruct the topology on the basis of the noisy Markov parameters \eqref{noisyMarkov}, where $r = 40$. The perturbations are drawn randomly from a normal distribution using the Matlab command {\tt randn}, and scaled such that $\norm{\Delta_i^\top}_{\infty} \leq 10^{-5}$ for all $i$. Since $\Delta_i$ is a vector, this also implies that $\norm{\Delta_i}_\infty \leq 10^{-5}$. In this example, we assume that the weights of the network (i.e., the entries of $Q$) have magnitudes between $\frac{1}{2}$ and $1$.
	
	We identify the matrix $\hat{Q}$ by solving \eqref{min}. To get an idea of the quality of estimation, we want to find a bound on \eqref{upperbound}. First, we compute $\alpha = 464.7040$. By the assumptions on the perturbations and network weights, we obtain the bounds $\norm{\Delta}_{\infty} \leq 10^{-5}$ and $\norm{\vect(Q)}_{\infty} \leq 1$. Moreover, 
	\begin{align*}
	\norm{\sum_{i=0}^{r-1} (\Delta_i^\top \otimes L_i)}_{\infty} &\leq \sum_{i=0}^{r-1} \norm{\Delta_i^\top}_{\infty} \norm{L_i}_{\infty} \\
	&\leq 4.0000 \times 10^{-4},
	\end{align*}
	where we have used \cite[Thm. 8 \& p. 413]{Lancaster1972} to bound the Kronecker product. Combining the previous bounds, we conclude that \eqref{upperbound} is less then or equal to $0.1883$. Since $0.1883 \leq 0.25$ we can round all entries of $\hat{Q}$ that are less than $0.25$ to zero, since the corresponding entries in $Q$ are necessarily zero. The resulting zero/nonzero structure of $\hat{Q}$ can be captured by a graph $\hat{\mathcal{G}}$ that we display in Figure \ref{fig:graph}. Clearly, the structure of $\hat{\mathcal{G}}$ is identical to the graph defined in Example \ref{example1}, and the weights of $\hat{\mathcal{G}}$ are close to the weights of $\mathcal{G}$. Next, we repeat the experiment for larger perturbations, i.e., for  $\norm{\Delta_i}_\infty$ and $\norm{\Delta_i^\top}_\infty$ bounded by $0.01$. We identify $\hat{Q}$ and use the same rounding strategy as before to obtain a graph $\hat{\mathcal{G}}$ in Figure \ref{fig:graph2}. Note that $\hat{\mathcal{G}}$ resembles the original network structure $\mathcal{G}$. In fact, all links are identified correctly, except for $(7,8)$ and the spurious link $(4,8)$. In this case, the bound \eqref{upperbound} equals $49.9997$, illustrating the fact that \eqref{upperbound} can be conservative.
	\begin{figure}[h!]
	\begin{minipage}{0.245\textwidth}
		\centering
			\includegraphics[width=\textwidth,center]{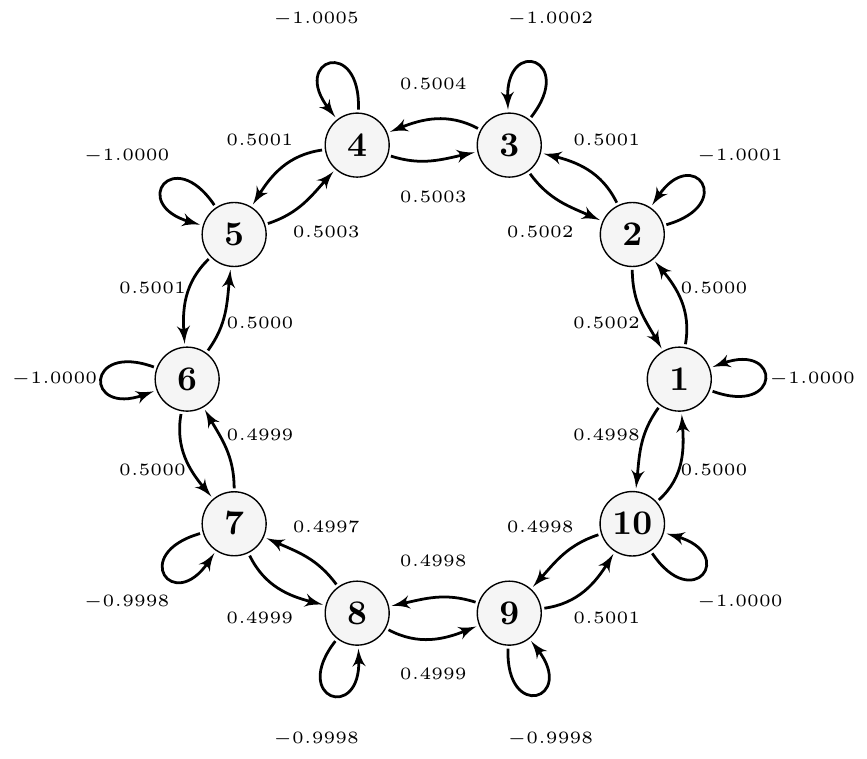}
			\caption{$\hat{\mathcal{G}}$ for $\norm{\Delta_i}_\infty \leq 10^{-5}$.}
			\label{fig:graph}
	\end{minipage}%
	\begin{minipage}{0.245\textwidth}
	\centering
		\includegraphics[width=\textwidth,center]{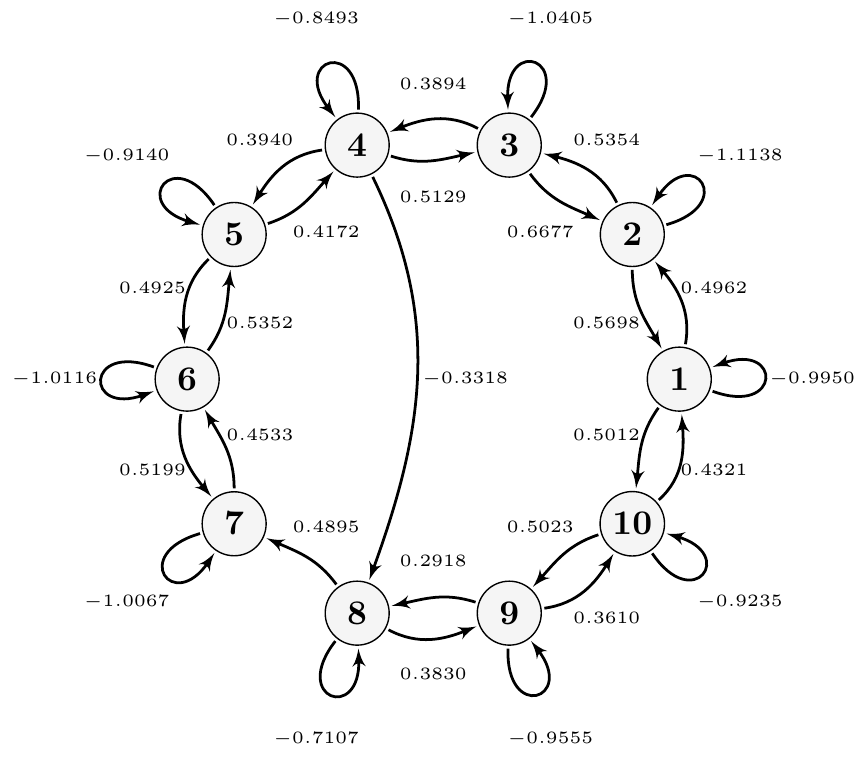}
		\caption{$\hat{\mathcal{G}}$ for $\norm{\Delta_i}_\infty \leq 10^{-2}$.}
		\label{fig:graph2}
	\end{minipage}
\end{figure}
\end{example}
\section{Conclusions}
\label{sectionconclusion}
In this paper we have studied the problem of topology identification of heterogeneous networks of linear systems. First, we have provided necessary and sufficient conditions for topological identifiability. These conditions were stated in terms of the constant kernel of certain network-related transfer matrices. We have also seen that homogeneous SISO networks enjoy quite special identifiability properties that do not extend to the heterogeneous case. Subsequently, we have turned our attention to the topology identification problem. The idea of the identification approach was to solve a generalized Sylvester equation involving the network's Markov parameters to obtain the network topology. One of the attractive features of the approach is that the structure of the networked system can be exploited so that each row block of the interconnection matrix can be obtained individually.  

The generalized Sylvester equation \eqref{Sylvester} plays an important role in our identification approach. Numerical solution methods are less well-developed for this equation than they are for the standard Sylvester equation \cite{Bartels1972,Golub1979}. Hence, it would be of interest to further develop numerical methods for Sylvester equations of the form \eqref{Sylvester}. We note that a Krylov subspace method has already been developed in \cite{Bouhamidi2008}. Another direction for future work is to study topological identifiability with prior information on the interconnection matrix. For example, from physical principles it may be known that $Q$ is Laplacian. Such prior knowledge could be exploited to weaken the conditions for identifiability in Theorems \ref{necsufconditions}, \ref{SRfullrank} and \ref{theoremSISO}.

\bibliographystyle{plainnat}
\bibliography{MyRef}          
                                
\end{document}